\newtheorem*{thm*}{Theorem}
\newtheorem*{conj*}{Conjecture}
\newtheorem*{remark}{Remark}
\newtheorem{theorem}{Theorem}[section]
\newtheorem{lemma}[theorem]{Lemma}
\newtheorem*{ThreeRemarks}{Three Remarks}
\newtheorem*{FourRemarks}{Four Remarks}
\newtheorem{example}{Example}
\newcommand{\CL}{\mathrm{CL}}
\newcommand{\Z}{\mathbb{Z}}
\newcommand{\Q}{\mathbb{Q}}
\newcommand{\F}{\mathbb{F}}
\newcommand{\R}{\mathbb{R}}
\newcommand{\SL}{\operatorname{SL}}
\newcommand{\C}{\mathbb{C}}
\newcommand{\tor}{\mathrm{tor}}
\newcommand{\Vol}{\operatorname{Vol}}
\newcommand{\leg}[2]{\genfrac{(}{)}{}{}{#1}{#2}}
\newcommand{\cubic}[2]{\genfrac{[}{]}{}{}{#1}{#2}_3}
\newcommand{\cubiccharacter}[2]{\genfrac{(}{)}{}{}{#1}{#2}_3}
\numberwithin{equation}{section}
\begin{document}
\title[Elliptic curves and class numbers]{Quadratic twists of elliptic curves and class numbers}
\author{Michael Griffin, Ken Ono and Wei-Lun Tsai}
\address{Department of Mathematics, 275 TMCB, Brigham Young University, Provo, UT 84602}
\email{mjgriffin@math.byu.edu}
\address{Department of Mathematics, University of Virginia, Charlottesville, VA 22904}
\email{ken.ono691@virginia.edu}
\email{tsaiwlun@gmail.com}

\begin{abstract}  For positive rank $r$ elliptic curves $E(\Q)$, we employ
ideal class pairings 
 $$
E(\Q)\times E_{-D}(\Q) \rightarrow \CL(-D),
$$
for quadratic twists $E_{-D}(\Q)$ with
a  suitable ``small $y$-height'' rational point,
to obtain effective  class number lower bounds.
For the curves
$E^{(a)}: \ y^2=x^3-a,$ with rank $r(a),$ this gives 
$$
 h(-D) \geq \frac{1}{10}\cdot  \frac{|E_{\tor}(\Q)|}{\sqrt{R_{\Q}(E)}}\cdot  \frac{\pi^{\frac{r(a)}{2}}}{2^{r(a)}\Gamma\left (\frac{r(a)}{2}+1\right)} 
\cdot \frac{\log(D)^{\frac{r(a)}{2}}}{\log \log D},
$$
representing an improvement to the classical lower bound of Goldfeld, Gross and Zagier when $r(a)\geq 3$.
We prove that the number of twists $E_{-D}^{(a)}(\Q)$ with such a point 
(resp. with such a point and rank $\geq 2$ under the Parity Conjecture) is
$\gg_{a,\varepsilon} X^{\frac{1}{2}-\varepsilon}.$
 We give infinitely many cases where $r(a)\geq 6$.  These results can be viewed  as an analogue of 
the classical estimate of Gouv\^ea and Mazur for the number of  rank $\geq 2$ quadratic twists, where in addition we obtain
``log-power'' improvements to the Goldfeld-Gross-Zagier class number lower bound.
\end{abstract}

\thanks{The second author thanks the NSF (DMS-1601306) and
the Thomas Jefferson fund at the U. Virginia.}

\maketitle
\section{Introduction and statement of results}\label{Intro}

Originally posed by Gauss, the problem of obtaining effective lower bounds for class numbers $h(-D)$  of imaginary quadratic fields
 $\Q(\sqrt{-D}),$ which also count equivalence classes of
  integral  positive definite binary quadratic forms of fundamental discriminant $-D$, has been one of fundamental challenges in
number theory. In the 1930s,  Siegel \cite{Siegel} proved, for every $\varepsilon>0$, that there are constants $c_1(\varepsilon),
c_2(\varepsilon)>0$ for which
$$
c_1(\varepsilon) D^{\frac{1}{2}-\varepsilon} \leq h(-D) \leq c_2(\varepsilon)D^{\frac{1}{2}+\varepsilon}.
$$
Unfortunately, Siegel's lower bound is inexplicit;
there is no known formula for $c_1(\varepsilon)$. As a consequence, the problem of obtaining an effective nontrivial lower bound  remained open for many decades. Finally in the 1980s,
 Goldfeld, Gross and Zagier \cite{Goldfeld1, Goldfeld2, GrossZagier} solved this problem by making use of ideas and results related
 to the Birch and Swinnerton-Dyer Conjecture.  Thanks to the existence of an elliptic curve with analytic rank 3, Oesterl\'e
 \cite{Oesterle} used their work to establish the effective lower bound
 \begin{equation}\label{GGZ}
h(-D) >
\frac{1}{7000}\left(\log D\right) \prod_{\substack{p\mid D\ {\text {\rm prime}}\\ p\neq D}} \left(
1-\frac{[2\sqrt{p}]}{p+1}\right).
\end{equation}

In recent work \cite{GO1}, the first two authors
 obtained effective lower bounds that improve on (\ref{GGZ}) for certain polynomial families of discriminants. The
  method makes direct use of the arithmetic of elliptic curves.
The idea is to employ
{\it  ideal class pairings}, maps of the form
 $$
 E(\Q)\times E_{-D}(\Q)\rightarrow \CL(-D),
 $$
 where $E_{-D}$ is the $-D$-quadratic twist of  $E$. Such maps
 were first defined and studied by Buell, Call, and Soleng \cite{Buell, BuellCall, Soleng}.
 
 Suppose that $E/\Q$ is given by
$$
E: \ \ y^2 = x^3 +a_4x +a_6,
$$
where $a_4, a_6\in \Z$, with $j$-invariant $j(E)$ and discriminant $\Delta(E)$, and
suppose that $E(\Q)$ has Mordell rank $r=r_{\Q}(E)\geq 1$.
Throughout, we suppose that $-D<0$ denotes a negative fundamental discriminant.
We let  $E_{-D}/\Q$ be its $-D$-quadratic twist\footnote{For reasons which will become apparent later, we use this nonstandard normalization.} given by the model
\begin{equation}\label{twist}
E_{-D}: \  -D\cdot \left (\frac{y}{2}\right )^2=\, x^3 +a_4x+a_6.
\end{equation}
Suppose that $Q_{-D}=\left(\frac{u}{w^2},\frac{v}{w^3}\right)\in E_{-D}(\Q),$ where\footnote{We note that these hypotheses guarantee that
$v$ is even when $-D$ is odd.} 
 $uv\neq 0.$ In Section~\ref{PairingSection}, we recall Theorem~\ref{ThmQF}, which gives the explicit construction of the pairing. Moreover, the theorem
determines situations where the classes obtained by
 pairing  points in $E(\Q)$ with $Q_{-D}$ are inequivalent, thereby providing  a lower bound for $h(-D)$.

We use this idea to derive lower bounds for $h(-D)$ in terms of $\Omega_r:=\pi^{\frac{r}{2}}/\Gamma\left (\frac{r}{2}+1\right)$, the volume of the $\R^r$-unit ball, the regulator $R_{\Q}(E)$,  the diameter $d(E)$ (see (\ref{diameter})), the torsion subgroup $E_{\tor}(\Q)$ and the point $Q_{-D}.$ We define the natural constants
\begin{equation}\label{cE}
c(E):=\frac{|E_{\tor}(\Q)|}{2^{r+1}\sqrt{R_{\Q}(E)}}\cdot \Omega_r,
\end{equation}
and
\begin{equation}\label{cED}
c(E,Q_{-D}):=c(E) \cdot \prod_{\substack{p \text{ prime}\\p\mid w}}\left(1-\frac{1}{|E(\F_p)|}\right).
\end{equation}
Here $|E(\F_p)|$ denotes the number of $\F_p$-points on the reduction of $E$ modulo $p$
(even for primes of bad reduction), including the point at infinity.

Our first result is a generalization of Theorem 1.1 of \cite{GO1}, which uses the usual logarithmic heights  (see Section~\ref{HeightStuff}) of $j(E)$ and $\Delta(E),$  to define
\begin{equation}
\delta(E):={\color{black} \frac{1}{2}h_{W}(j(E))+\frac{1}{3}h_{W}(\Delta(E))+\frac{20}{3}}.
\end{equation}
To facilitate the comparison with $\log(D)$, we define
\begin{equation}\label{Tconstant}
T_E(-D,Q_{-D}):= \log \left(\frac{D}{|u|+w^2}\right)-\delta(E).
\end{equation}
Finally, we say that $Q_{-D}=\left(\frac{u}{w^2},\frac{v}{w^3}\right)\in E_{-D}(\Q)$ is {\it suitable for $E$} if $uv\neq 0$ and
\begin{equation}\label{suitable}
(|u|+w^2)\exp(\delta(E)+d(E)) < D < \frac{(|u|+w^2)^2\max(|u|,w^2)^2}{v^4}.
\end{equation}
It turns out that suitable rational points $Q_{-D}$  will have ``small'' $v.$  For notational convenience,
we let $\widehat{c}(E,Q_{-D}):= 2\cdot 3^r r \sqrt{d(E)}c(E)\cdot \mathcal{S}(w),$ where $\mathcal{S}(w)$ denotes the number
of positive square-free divisors of $w$.
We obtain the following theorem.
\begin{theorem}\label{General}
Assuming the notation and hypotheses above, if $Q_{-D}$ is suitable for $E$, then
$$
h(-D)\geq c(E,Q_{-D}) \cdot T_E(-D,Q_{-D})^{\frac{r}{2}}-
 \widehat{c}(E,Q_{-D})  \cdot  T_E(-D,Q_{-D})^{\frac{r-1}{2}}.
$$
\end{theorem}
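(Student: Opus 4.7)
The strategy is to pair a controlled family of rational points on $E$ with $Q_{-D}$ via the ideal class pairing of Theorem~\ref{ThmQF}, count the resulting inequivalent classes in $\CL(-D)$ by geometry of numbers on the Mordell-Weil lattice, and finally sieve at primes dividing $w$.

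First, I would use the height machine to show that the two bounds in the suitability condition~(\ref{suitable}) together imply that every $P\in E(\Q)$ with canonical height $\hat{h}(P)\le T_E(-D,Q_{-D})/4$ yields, when paired with $Q_{-D}$, an ideal class in $\CL(-D)$ satisfying the non-equivalence criterion supplied by Theorem~\ref{ThmQF}. The defect $\delta(E)$ controls the comparison between $\hat{h}(P)$ and the naive Weil height of $x(P)$; the lower bound in~(\ref{suitable}) forces $T_E>d(E)$ so the corresponding height ball is nonempty and exceeds a lattice cell; and the upper bound $D<(|u|+w^2)^2\max(|u|,w^2)^2/v^4$ is what makes the binary quadratic form produced by the pairing have fundamental discriminant exactly $-D$ while separating the classes attached to different $\{\pm P\}$.

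Next I would apply a standard geometry-of-numbers count. The free part $E(\Q)/E_{\tor}(\Q)$, equipped with the canonical height pairing, embeds as a rank-$r$ lattice $\Lambda\subset\R^r$ of covolume $\sqrt{R_{\Q}(E)}$ and fundamental-domain diameter at most $\sqrt{d(E)}$. Comparing the ball $B$ of radius $\tfrac{1}{2}\sqrt{T_E}$ with a disjoint union of lattice translates of a fundamental parallelepiped, the number of lattice points in $B$ is at least
$$
\frac{\Omega_r\,(T_E/4)^{r/2}}{\sqrt{R_{\Q}(E)}}\;-\;\frac{3^r\,r\,\Omega_r\,(T_E/4)^{(r-1)/2}\sqrt{d(E)}}{\sqrt{R_{\Q}(E)}},
$$
the subtracted term accounting for a boundary shell of width $\sqrt{d(E)}$ covered by at most $3^r$ lattice-cell translates per unit area. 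Multiplying by $|E_{\tor}(\Q)|$ to lift from $\Lambda$ to $E(\Q)$ and dividing by $2$ for the involution $P\mapsto -P$ gives a main term of exactly $c(E)\cdot T_E^{r/2}$. The sieve then enters: for each prime $p\mid w$, the non-equivalence conclusion of Theorem~\ref{ThmQF} requires the reduction of $P$ modulo $p$ to avoid a specific $\F_p$-point of $E$, producing the local density $(1-1/|E(\F_p)|)$ whose product over $p\mid w$ upgrades $c(E)$ to $c(E,Q_{-D})$. Inclusion--exclusion across the $\mathcal{S}(w)$ squarefree divisors of $w$ turns the boundary error above into a term of the shape $\widehat{c}(E,Q_{-D})\cdot T_E^{(r-1)/2}$.

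The main obstacle is the first step: verifying that the single threshold $T_E/4$ is simultaneously compatible with all the algebraic hypotheses of Theorem~\ref{ThmQF}. Both directions of the suitability condition~(\ref{suitable}) enter here, and translating the coordinate-level non-equivalence criterion into a clean canonical-height inequality---mediated by the defect $\delta(E)$ and the size of the $y$-denominator $v$---is the genuinely delicate quantitative step; the rest is bookkeeping of lattice-boundary and sieve error terms.
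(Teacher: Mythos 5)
Your overall architecture matches the paper's: a lattice-point count for $\{P:\hat h(P)\le T_E/4\}$ with a congruence restriction at the primes dividing $w$ (the paper's Lemmas~\ref{PropRawBounds} and~\ref{PropRawBoundsCop}), followed by pairing with $Q_{-D}$ via Theorem~\ref{ThmQF}. The bookkeeping of the main term, the factor $1/2$ for $P\mapsto -P$, the Euler product $\prod_{p\mid w}(1-1/|E(\F_p)|)$ obtained by inclusion--exclusion over kernels of reduction, and the $\mathcal S(w)$ in the error term are all as in the paper.

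The gap is exactly the step you flag as ``the main obstacle'' and then do not carry out. Theorem~\ref{ThmQF} is a dichotomy: if $F_{P_1,Q_{-D}}$ and $F_{P_2,Q_{-D}}$ are equivalent, then either $\alpha_1/G_1=\alpha_2/G_2$ or $\alpha_1\alpha_2/(G_1G_2)\ge D/4$, so inequivalence requires negating both branches, and the two branches use the two sides of (\ref{suitable}) in genuinely different ways. For the second branch one combines $\hat h(P_i)\le T_E/4$ with Theorem~\ref{Silverman_bounds} and the definition of $T_E$ (this is where $\delta(E)$ and the lower bound in (\ref{suitable}) enter) to get $H(P_i)\le \sqrt D/(2(|u|+w^2))$, hence $\alpha_i/G_i\le\alpha_i\le(|u|+w^2)H(P_i)\le\tfrac12\sqrt D$ and $\alpha_1\alpha_2/(G_1G_2)<D/4$. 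For the first branch --- which your sketch does not address beyond saying the upper bound on $D$ ``separates the classes'' --- one must show that $\alpha_1/G_1=\alpha_2/G_2$ forces $P_1=\pm P_2$. This is where the sieve condition $\gcd(C_i,w)=1$ is actually used: together with $\gcd(A_i,C_i)=1$ it gives $G_i=\gcd(\alpha_i,C_i^6v^2)=\gcd(\alpha_i,v^2)\le v^2$; clearing denominators in $|A_1w^2-uC_1^2|/G_1=|A_2w^2-uC_2^2|/G_2$ yields $u(C_1^2G_2\pm C_2^2G_1)=w^2(A_1G_2\pm A_2G_1)$, coprimality of $u$ and $w^2$ gives $w^2\mid(C_1^2G_2\pm C_2^2G_1)$ and $u\mid(A_1G_2\pm A_2G_1)$, and the upper bound $D<(|u|+w^2)^2\max(|u|,w^2)^2/v^4$ forces $|A_i|,C_i^2<\max(|u|,w^2)/(2v^2)$, so both quantities are smaller than $\max(|u|,w^2)$ and must vanish, whence $A_1/C_1^2=A_2/C_2^2$. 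Without this divisibility-plus-size argument the lattice-point count does not translate into a count of inequivalent forms, and the theorem is not proved. Relatedly, your description of the sieve at $p\mid w$ as ``avoiding a specific $\F_p$-point'' is the right picture (the point is the identity, i.e.\ $p\nmid C$), but its purpose is precisely to cap $G_i$ by $v^2$ in the argument above, not merely to supply a local density.
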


To give an indication of the frequency that Theorem~\ref{General} offers an improvement to (\ref{GGZ}), we
consider the elliptic curves
\begin{equation}\label{jzero}
E^{(a)}: \ \  y^2 = x^3 -a,
\end{equation}
where $a$ is a positive integer. 
By constructing explicit infinite order points $Q_{-D}\in
E_{-D}^{(a)}(\Q),$ which are often suitable,
we obtain  effective lower bounds for $h(-D),$ formulated in terms of the rank
of $E^{(a)}$ and the natural constant $c(E^{(a)})$ defined in (\ref{cE}). For notational convenience, we let
\begin{equation}
\mathfrak{S}(E):= \{ -D  \ : \ E_{-D}(\Q)\ {\text {\rm has an infinite order suitable point}}\ Q_{-D}\}.
\end{equation}

\begin{theorem}\label{MainTheorem} If $E^{(a)}(\Q)$ has rank $r$ and $\varepsilon>0$, then we have
$$
\#\left  \{ -X<-D<0 \ : \  -D\in \mathfrak{S}(E^{(a)})\ {\text {\rm and}}\ 
 h(-D) > \frac{c(E^{(a)})}{5}\cdot   \frac{\log(D)^{\frac{r}{2}}}{\log \log D}\right\} \gg_{a,\varepsilon} X^{\frac{1}{2}-\varepsilon}.
$$
 Assuming the Parity Conjecture for elliptic curves, 
 we may also require $r_{\Q}(E_{-D}^{(a)})\geq 2.$
\end{theorem}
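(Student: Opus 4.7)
The plan is to produce a two-parameter family of twists $E^{(a)}_{-D}$ bearing explicit suitable points $Q_{-D}$, and then apply Theorem~\ref{General} pointwise.

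\textbf{The family.} A point $(u/w^2, v/w^3) \in E^{(a)}_{-D}(\Q)$ satisfies $Dv^2 = 4(aw^6 - u^3)$. Taking $(u, w, v) = (-t, m, 2)$ for positive integers $m, t$ yields
\[
Q_{-D} = \left( -\tfrac{t}{m^2},\; \tfrac{2}{m^3} \right) \in E^{(a)}_{-D}(\Q), \qquad D = a m^6 + t^3.
\]
The parameter pairs with $D \leq X$ number $\asymp_a X^{1/2}$. For distinctness, a Cauchy--Schwarz argument combined with a divisor-function bound on the solutions of $t_2^3 - t_1^3 = a(m_1^6 - m_2^6)$ shows off-diagonal coincidences contribute $O(X^{1/3}\log X)$, so the number of distinct $D$ is $\gg X^{1/2}/\log X$. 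A two-variable squarefree sieve (in the spirit of Hooley's theorem for squarefree values of cubic polynomials, refined by residue conditions mod $4$) then isolates a positive proportion of pairs for which $-D$ is a fundamental discriminant, giving $\gg_{a,\varepsilon} X^{1/2-\varepsilon}$ fundamental $-D$ of this form.

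\textbf{Suitability, infinite order, and the class number bound.} Since $|u|+w^2 = t+m^2 \leq 2 D^{1/3}$, we have $T_{E^{(a)}}(-D,Q_{-D}) \geq (\tfrac{2}{3}-o(1))\log D$, and both inequalities in (\ref{suitable}) reduce to polynomial comparisons in $m, t$ that hold once $D$ is sufficiently large. Infinite order of $Q_{-D}$ follows from an explicit inspection of the torsion subgroups of twists of the $j=0$ curve. Applying Theorem~\ref{General} and bounding $c(E^{(a)}, Q_{-D})$ below by the Mertens estimate
\[
\prod_{p\mid m}\left(1 - \tfrac{1}{|E^{(a)}(\F_p)|}\right) \gg \frac{1}{\log\log D}
\]
(valid outside an $o(X^{1/2})$-sized exceptional set of $m$ with atypically large $\omega(m)$, controlled by Erd\H{o}s--Kac), while absorbing the secondary term $\widehat{c}(E^{(a)}, Q_{-D})\, T^{(r-1)/2}$ into the leading one, yields $h(-D) \geq \tfrac{1}{5}\, c(E^{(a)})\,(\log D)^{r/2}/\log\log D$ for $D$ past an effective threshold.

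\textbf{Parity refinement and main obstacle.} Infinite order of $Q_{-D}$ gives $r_\Q(E^{(a)}_{-D}) \geq 1$; under the Parity Conjecture, root number $w(E^{(a)}_{-D}) = +1$ forces the algebraic rank to be even, hence $r_\Q \geq 2$. Decomposing $w(E^{(a)}_{-D})$ into local factors governed by a Kronecker symbol at $-N_{E^{(a)}}$ together with bad-prime corrections, the equidistribution of $\chi_{-D}$ over our family gives $w = +1$ for at least half the $-D$, preserving the $\gg X^{1/2-\varepsilon}$ lower bound. The principal obstacle is the first step: a two-variable squarefree sieve for the polynomial $am^6 + t^3$ combined with the coincidence count sits just beyond the scope of classical sieves and requires the refinements indicated; in parallel, one must choose the ranges of $m$ and $t$ so that the secondary error terms in the Mertens estimate and in the sieve do not degrade the explicit constant $1/5$ in the final bound.
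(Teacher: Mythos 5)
Your construction is, after renaming, exactly the paper's: take the explicit point $Q_{-D}=\bigl(-t/m^2,\,2/m^3\bigr)$ on the twist $E^{(a)}_{-D}$, which is the special case $t=1$ of the paper's parametrization $Q_{-D}=(m/n^2, 2t/n^3)$ in Lemma~\ref{criterion} (the paper keeps $t$ in a bounded window $[T,2T]$ for flexibility, but $T=O(1)$ in the end). The suitability estimate $|u|+w^2\ll D^{1/3}$, the reduction to Theorem~\ref{General}, the absorption of the secondary term, and the parity refinement via $W(E^{(a)}_{-D})=\leg{-D}{N^{(a)}}W(E^{(a)})$ all track the paper's Section~4 closely. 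Your Erd\H{o}s--Kac/exceptional-set detour for the Mertens product is unnecessary but harmless: the paper gets the pointwise bound $c(E^{(a)},Q_{-D})>0.2158\,c(E^{(a)})/\log\log D$ (Lemma~\ref{lambda}) from Rosser--Schoenfeld and effective Mertens, using only that the denominator $n$ (your $m$) is $\ll X^{1/6}$ and hence has $O(\log X)$ prime divisors.

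The genuine gap is the one you yourself flag: the squarefree sieve that must show $\gg_{a,\varepsilon}X^{1/2-\varepsilon}$ of the values $am^6+t^3$ give fundamental discriminants $-D$. Citing ``Hooley's theorem for squarefree values of cubic polynomials'' does not cover this; Hooley's result is for a one-variable cubic, and here you face a genuinely two-variable expression $am^6+t^3$ which, with $m\sim X^{1/6}$ and $t\sim X^{1/3}$, is not a binary form in $m,t$ of the kind Greaves or Gouv\^ea--Mazur treat directly (it is a binary cubic in $(m^2,t)$, but you need $m^2$ to range over perfect squares, which is not what those sieves give). The paper closes exactly this gap with Theorem~\ref{summatory} and the lemmas of Section~\ref{DiophantineResult}: it introduces the local counting function $\rho_m^{(a)}(M)=\#\{n\bmod M: an^6\equiv m^3\pmod M\}$, computes it prime-by-prime (Lemma~\ref{rholemma}) via Hensel plus quadratic and cubic residue symbols, converts the cubic residue symbol $\cubic{\cdot}{p}$ into genuine multiplicative characters $\cubiccharacter{\cdot}{\pi}$ over $\Z[\omega]$ (Lemma~\ref{CubicDictionary}), and then averages over $m$ and the denominators using P\'olya--Vinogradov (Lemmas~\ref{SoundLemma1}--\ref{SoundLemma2}). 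A small-prime inclusion--exclusion with $P(X)=\prod_{p\leq\log X}p$, plus separate estimates for $p^2\mid d$ in the ranges $\log X<p\leq Z$ and $p>Z$ (the latter handled by a unique-factorization count giving $N_h^{(a)}(d;X,T)=O(X^\varepsilon)$), then produces the asymptotic $\sum_{d\leq X}N_h^{(a)}(d;X,T)\asymp_a X^{1/2}T^{A+B-1}$ directly for square-free $d$, with no appeal to a black-box squarefree sieve. So the overall route is yours, but the load-bearing step needs this explicit Diophantine analysis rather than an off-the-shelf sieve theorem.
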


\begin{FourRemarks}  \ \

\noindent
(1) The multiplicative constant $1/5$ was chosen for aesthetics, and is the lower bound offered in the abstract. By Lemma~\ref{lambda}, one can replace $1/5$ with
 any constant $<0.2158.$
\smallskip

\noindent (2)
The lower bound  $\gg_{a,\varepsilon} X^{\frac{1}{2}-\varepsilon}$ for the number of discriminants $-X<-D<0$ in Theorem~\ref{MainTheorem} is an improvement of the results in \cite{GO1} (see the second Remark after Theorem 1.2 in \cite{GO1}), where
$\gg X^{\frac{1}{3}}$ many discriminants are obtained.

\smallskip
\noindent 
(3)  Each rank 1 curve $E^{(a)}(\Q)$ gives $\gg_{a,\varepsilon} X^{\frac{1}{2}-\varepsilon}$ many discriminants $-X<-D<0$
with
$$
 h(-D) > \frac{c(E^{(a)})}{5}\cdot   \frac{\sqrt{\log D}}{\log \log D}.
$$
Although such estimates do not improve on (\ref{GGZ}), it is plausible that a new proof of Gauss's class number 1 problem,  famously proved by Baker, Heegner and Stark \cite{Baker, Heegner, Stark}, can be obtained 
by making use of the large supply of rank 1 curves $E^{(a)}(\Q)$.

\smallskip
\noindent
(4) Goldfeld's famous conjecture \cite{GoldfeldConjecture} on quadratic twists of elliptic curves implies that asymptotically ``half of the quadratic twists'' of $E^{(a)}$
have rank 0 (resp. 1). 
Theorem~\ref{MainTheorem} is related to the well-studied problem of
 estimating the number of those rare twists with rank $\geq 2$. In an important paper,
Stewart and Top (see Theorem~3 of \cite{StewartTop}) unconditionally proved 
$$\# \{ -X<-D<0 \ : \ r_{\Q}(E^{(a)}_{-D})\geq 2\} \gg_{a} \frac{X^{\frac{1}{7}}}{\log X}.
$$
However, it is widely believed that this lower bound is not optimal. Indeed, 
a classical result of
Gouv\^ea and Mazur (see Theorem~2 of \cite{GouveaMazur})), which assumes the Parity Conjecture, gives
$$\# \{ -X<-D<0 \ : \ r_{\Q}(E^{(a)}_{-D})\geq 2\} \gg_{a,\varepsilon} X^{\frac{1}{2}-\varepsilon}.
$$
If $r(a)\geq 3$, then Theorem~\ref{MainTheorem} can be viewed as an analogue of this result, where we also obtain a
``log-power'' improvement to the Goldfeld-Gross-Zagier class number lower bound (\ref{GGZ}).
\end{FourRemarks}

\begin{example}
The elliptic curve\footnote{All computations in this paper were performed using \texttt{SageMath} \cite{sage}.} $E^{(174)}(\Q)$ has no nontrivial torsion, and has rank $3$, with generators
$(7,13), (25/4, 67/8),$ and  $(151/25 , -851/125).$  Moreover, we have 
$\Omega_3=4\pi/3\approx 4.1887$ and $R_{\Q}(E^{(174)})\approx 46.1056,$ which gives
$c(E^{(174)}) \approx 0.0385>1/26.$
Therefore, Theorem~\ref{MainTheorem} implies that
$$
\#\left  \{ -X<-D<0 \ : \  -D\in \mathfrak{S}(E^{(174)})\ {\text {\rm and}}\ 
 h(-D) > \frac{1}{130}\cdot  \frac{\log(D)^{\frac{3}{2}}}{\log \log D}\right\}\gg_{\varepsilon} 
 X^{\frac{1}{2}-\varepsilon}.
$$
Assuming the Parity Conjecture, we may also require that $r_{\Q}(E^{(174)}_{-D})\geq 2$.
\end{example}

\begin{example}Theorem~\ref{MainTheorem} holds for infinite (if any) subgroups of $E^{(a)}(\Q)$
(see Lemma~\ref{PropBoundsApprx}),
where one employs the natural analogues of $R_{\Q}(E^{(a)}).$ 
Elkies \cite{Elkies1, Elkies2} found that $E^{(k)}$, where
$$
  k := 2195745961 \cdot 413891567044514092637683,
$$
has $r_{\Q}(E^{(k)})\geq 17.$ Theorem~\ref{MainTheorem} for this curve
then gives
$$
\#\left  \{ -X<-D<0 \ : \ -D\in \mathfrak{S}(E^{(k)}) \ {\text {\rm and}}\ 
 h(-D) > \frac{c(E^{(k)})}{5}\cdot \frac{\log(D)^{\frac{17}{2}}}{\log \log D}\right\} \gg_{k,\varepsilon} X^{\frac{1}{2}-\varepsilon},
$$
offering a large ``log-power'' improvement to (\ref{GGZ}).
Using the 17 independent points listed in
\cite{Elkies1, Elkies2}, one can find that $c(E^{(k)})\approx 2.84243\cdot 10^{-19}.$  Again, assuming the Parity Conjecture, we may also require that $r_{\Q}(E^{(k)}_{-D})\geq 2$.
\end{example}

\begin{example}
For $r\in \{3, 4, 5, 6\},$ we consider
curves $E^{(a_r(T))}/\Q(T),$
where 
\begin{align*}
a_3(T)&:= 2^43^3(4T^6-8T^4+40T^2-31),\\  \ \ \\
a_4(T)&:=6075T^{12}+38070T^{11}+81513T^{10}+83106T^9+67797T^8+39528T^7+27270T^6\\
&\ \ \ \ \ \ \ \ \ +58968T^5
+89181T^4+84834T^3+52353T^2+23814T-9261,\\ \ \ \\
a_5(T)&:=\frac{64}{27}\left(T^{18}+2973T^{12}-369249T^6+11764900\right),\\ \ \ \\
a_6(T)&:= ({2^6\cdot7^{54}\cdot13^2\cdot1297 \cdot 74449^3 \cdot 793041539 \cdot 1995792099060563}/{27})\cdot T^{54}\\ 
&\ \ \ \ \ \ \ \ \ +(2^9 \cdot7^{53} \cdot 13 \cdot 1999 \cdot 74449^2 \cdot 1923403 \cdot 881277323405000103687971
)\cdot T^{53}+\cdots\notag\\
&\ \ \ \ \ \ \ \ \ +\cdots +\cdots+(2^9 \cdot7^{53} \cdot 13 \cdot 1999 \cdot 74449^2 \cdot 1923403 \cdot 881277323405000103687971
)\cdot T\notag\\
&\ \ \ \ \  \ \ \  \ +(2^6 \cdot 5^3 \cdot 11 \cdot 8123 \cdot 1882419814724639 \notag\\
&\ \ \ \ \ \ \ \ \  \  \ \ \   \cdot 177610817485358112101332029225675499667600288403153465585113540179/27).\notag
\end{align*}
Using work of Mestre \cite{Mestre}, Stewart and Top \cite{StewartTop}  proved that
each $E^{(a_r(T))}/\Q(T)$ has rank $r.$
By Silverman's specialization theorem \cite{Silverman}, 
for all but finitely many integers $t,$ Theorem~\ref{MainTheorem} gives
$$
\#\left\{ -X<-D<0 \ : \  -D\in \mathfrak{S}(E^{(a_r(t))})\ {\text {\rm and}}\ h(-D)>\frac{c_r(t)}{5}\cdot
\frac{\log(D)^{\frac{r}{2}}}{\log \log D}\right\} \gg_{r,\varepsilon} X^{\frac{1}{2}-\varepsilon},
$$
where  $c_r(t)$ is defined using the $r$ points
given in \cite{StewartTop}.
Again, assuming the Parity Conjecture, we may also require that $r_{\Q}(E^{(a_r(t))}_{-D})\geq 2$.
\end{example}

This paper is organized as follows. In Section~\ref{NutsAndBolts} we prove Theorem~\ref{General}, an extension of Theorem 1.1 of \cite{GO1}. To prove Theorem~\ref{MainTheorem}, we use the fact
that the existence of a suitable point $Q_{-D}\in E_{-D}^{(a)}(\Q)$ for $E^{(a)}$ is equivalent to the 
solvability of the
Diophantine equation
$$-Dt^2=m^3-an^6,
$$
where the integer triples $(m,n,t)$ satisfy certain inequalities. To make use of this fact, in Section~\ref{DiophantineResult}
we prove an auxiliary theorem of independent interest (see Theorem~\ref{summatory}), which gives asymptotic formulas 
for the number of solutions to this equation where the parameters are chosen from natural intervals.
In particular, as a function of $T=T(X)$, this theorem can be used to estimate (see (\ref{refined}))
the number of discriminants $-X<-D<0$ for which there is an infinite order rational point $Q_{-D}=\left(\frac{u}{w^2},\frac{v}{w^3}\right)
\in E_{-D}(\Q)$ with $T\leq v \leq 2T.$
This result is obtained using
 the P\'olya-Vinogradov inequality, combined with a sieve-type count involving solutions to polynomial congruences.   In Section~\ref{MainTheoremProof}, we then prove Theorem~\ref{MainTheorem}.

\section*{Acknowledgements}  The second author thanks the NSF (DMS-1601306) and
the Thomas Jefferson fund at the U. Virginia. The authors thank the referee, N. Elkies, D. Goldfeld, B. Gross, J. Iskander, F. Luca, K. Soundararajan, D. Sutherland and J. Thorner for  useful comments concerning this paper.

\section{Ideal class pairings and the proof of Theorem~\ref{General}}\label{NutsAndBolts}

Works by Buell, Call, and Soleng \cite{Buell, BuellCall, Soleng} offered elliptic curve ideal class pairings, which
produce discriminant $-D$ integral positive definite binary quadratic forms from points on
$E(\Q)$ and $E_{-D}(\Q)$.
Theorem 2.1 of \cite{GO1} is a generalization and minor correction of Theorem 4.1 of \cite{Soleng}.\footnote{This corrects sign errors in the discriminants in Theorem 4.1 of \cite{Soleng}, and also ensures the resulting quadratic forms are integral when $C\neq 1$. Moreover, this theorem allows for both even and odd discriminants.}  
We begin by recalling this result.

\subsection{Ideal class pairing}\label{PairingSection}
Assume the notation from Section~\ref{Intro}.
Let $P=(\tfrac{A}{C^2},\tfrac{B}{C^3})\in E(\Q),$ with $A, B, C\in \Z$, and $Q=(\tfrac{u}{w^2},\tfrac{v}{w^3})\in E_{-D}(\Q),$
with $u,v,w\in \Z,$ not necessarily in lowest terms, but so that no sixth power divides $\gcd(u^3,v^2,w^6)$. Every $Q$ clearly has such a representation, and thanks to (\ref{twist}), we find that $\gcd(u,w^2)$ and $\gcd(v,w^3)$ both divide $D$. Moreover, suppose that
 $uv\neq 0$, which guarantees that $v$ is even when $-D$ is odd.
If we let $\alpha:=|Aw^2-u C^2|$ and  $G:=\gcd(\alpha, C^6v^2),$ then there are integers $\ell$
for which $F_{P,Q}(X,Y)$ defined below is a discriminant $-D$ positive definite integral binary quadratic form. 
\begin{equation}
{\color{black}
F_{P,Q}(X,Y):=\frac{\alpha}{G} \cdot X^2+\frac{2w^3 B+\ell  \cdot \tfrac{\alpha}{G}}{C^3v}\cdot XY +\frac{\left({ 2w^3B+
\ell \cdot \tfrac{\alpha}{G}}\right)^2+C^6 v^2{D}}{4C^6v^2\cdot \frac{\alpha}{G}}\cdot Y^2.
}
\end{equation}

\begin{theorem}\label{ThmQF}{\text {\rm [Theorem 2.1 of \cite{GO1}]}}
Assuming the notation and hypotheses above, $F_{P,Q}(X,Y)$ is well defined (e.g. there is such an $\ell$)  {\color{black} in $\CL(-D)$}.
Moreover, if $(P_1,Q_1)$ and $(P_2, Q_2)$ are two such pairs for which $F_{P_1,Q_1}(X,Y)$ and $F_{P_2,Q_2}(X,Y)$ are $\SL_2(\Z)$-equivalent, then $\frac{\alpha_1}{G_1}=\frac{\alpha_2}{G_2}$ or $\frac{\alpha_1\alpha_2}{G_1G_2}\geq D/4$.
\end{theorem}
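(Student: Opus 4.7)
The theorem has two components: (a) for some integer $\ell$, the expression $F_{P,Q}(X,Y) = aX^2 + bXY + cY^2$ is an integral positive definite binary quadratic form of discriminant $-D$; and (b) the inequivalence criterion. Since the statement is quoted from Theorem~2.1 of \cite{GO1} (a variant of Soleng's Theorem~4.1 in \cite{Soleng}), the plan follows the strategy of those works.

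For (a), I would first verify by direct computation that $b^2 - 4ac = -D$ for every $\ell$, so positive definiteness of $F_{P,Q}$ follows from $\alpha = |Aw^2 - uC^2| > 0$ (the degenerate case $Aw^2 = uC^2$ is excluded). Integrality of $a = \alpha/G$ is immediate from the definition of $G$. For $b = (2w^3 B + \ell\alpha/G)/(C^3 v)$, I need to produce $\ell$ with
\[
\ell \cdot \frac{\alpha}{G} \equiv -2 w^3 B \pmod{C^3 v}.
\]
The key input is the identity
\begin{equation*}
4B^2 w^6 + DC^6 v^2 = 4(Aw^2 - uC^2)\bigl[(Aw^2)^2 + (Aw^2)(uC^2) + (uC^2)^2 + a_4 w^4 C^4\bigr],
\end{equation*}
which I would derive by substituting $B^2 = A^3 + a_4 AC^4 + a_6 C^6$ (from $P \in E(\Q)$) and $-Dv^2 = 4u^3 + 4a_4 uw^4 + 4a_6 w^6$ (from $Q \in E_{-D}(\Q)$). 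This shows $\alpha \mid (2Bw^3)^2 + D(C^3 v)^2$, so $(2Bw^3)^2 \equiv -D(C^3 v)^2 \pmod{\alpha}$. Combined with $\gcd(\alpha/G, C^6 v^2/G) = 1$ (from $G = \gcd(\alpha, C^6 v^2)$), this yields $\gcd(\alpha/G, C^3 v) \mid 2 w^3 B$, which solves the congruence. The constant coefficient $c = (b^2 + D)/(4a)$ then becomes integral after a parity check using the footnoted fact that $v$ is even whenever $-D$ is odd.

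For (b), I would use the reduction theory of positive definite binary quadratic forms. If a form $F$ of discriminant $-D$ is properly equivalent to a reduced form $a_0 X^2 + b_0 XY + c_0 Y^2$ with $|b_0| \leq a_0 \leq c_0$ and $b_0^2 - 4a_0 c_0 = -D$, then $4a_0 c_0 = b_0^2 + D \geq D$, so $c_0 \geq D/(4 a_0)$. Moreover, the minimum of the positive integers primitively represented by $F$ equals $a_0$ (attained only at $(\pm 1,0)$ for the reduced model), and the second smallest distinct primitively represented integer is at least $c_0$. Applying this to $F_{P_1,Q_1}$ and $F_{P_2,Q_2}$ in a common $\SL_2(\Z)$-class, both of whose leading coefficients $a_i = \alpha_i/G_i = F_i(1,0)$ are primitively represented values, one obtains either $a_1 = a_2$ (both equal to $a_0$) or else $\max(a_1, a_2) \geq c_0 \geq D/(4 \min(a_1,a_2))$, i.e.\ $a_1 a_2 \geq a_0 c_0 \geq D/4$, which is the claimed dichotomy.

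The main obstacle will be the integrality step, namely the bookkeeping needed to show that $\gcd(\alpha/G, C^3 v)$ divides $2 w^3 B$; the key identity above is the crux, and care must be taken with the factor of $4$ and with the parity of $v$ and $D$ (which is precisely why the hypotheses $uv \neq 0$ and $v$-even-when-$D$-odd are imposed). The discriminant and positive-definiteness verifications are formal, and the inequivalence criterion then follows cleanly from the reduction theory of binary quadratic forms once the leading coefficients are identified with primitively represented values.
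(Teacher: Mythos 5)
This paper does not reprove the statement; it is imported verbatim from Theorem~2.1 of \cite{GO1} (itself a corrected form of Theorem~4.1 of \cite{Soleng}), and your outline reproduces that argument: the polynomial identity giving $4\alpha \mid (2Bw^3)^2 + D(C^3v)^2$ drives part (a), and reduction theory applied to the primitively represented leading coefficients $\alpha_i/G_i = F_i(1,0)$ gives part (b). The one step you state too quickly is the deduction $\gcd(\alpha/G, C^3v)\mid 2w^3B$: from $g:=\gcd(\alpha/G,C^3v)$ dividing $(2w^3B)^2 + D(C^3v)^2$ and $C^3v$ one only gets $g\mid (2w^3B)^2$, which does not formally imply $g\mid 2w^3B$; the correct route is prime-by-prime, noting that $\gcd(\alpha/G,C^6v^2/G)=1$ forces $v_p(\alpha)\geq 2v_p(C^3v)$ for every $p\mid g$, whence $v_p(2w^3B)\geq v_p(C^3v)\geq v_p(g)$ --- this is precisely the bookkeeping you flag as the crux, and it does close (as does the mod-$4$ integrality of the last coefficient, using the freedom in $\ell$ and the parity of $v$).
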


\begin{example} For $E: y^2=x^3-4x+9$, we have points $P_1:=(0,3)$ and $P_2:=(-2,3)$. We consider
the example of $h(-24)=2$.
Using $Q:=(-3,1)\in E_{-24}(\Q)$ and $\ell=2$, we obtain representatives for the two inequivalent discriminant $-24$ forms
$$F_{P_1,Q}(X,Y)=3X^2+12XY+14Y^2 \ \ \ {\text {\rm and}}\ \ \ F_{P_2,Q}(X,Y)=X^2+8XY+22Y^2.
$$
\end{example}

\subsection{Proof of Theorem~\ref{General}}\label{HeightStuff}

To deduce Theorem~\ref{General} from Theorem~\ref{ThmQF}, we use estimates for the number of bounded height rational points on elliptic
curves. We recall the facts we require.
Each rational point $P\in E(\Q)$ has the form $P=(\frac{A}{C^2},\frac{B}{C^3})$, with $A,B,C$ integers such that $\gcd(A,C)=\gcd(B,C)=1$. The {\it naive height} of $P$ is
$H(P)=H(x):=\max(|A|, |C^2|),$ and
 the {\it logarithmic height} (or Weil height) is 
$h_W(P)= h_W(x):=\log H(P).$
Finally, we recall the {\it canonical height }
\begin{equation}
\widehat h(P):= \tfrac{1}{2}\lim_{n\to \infty}\frac{h_W(nP)}{n^2}.
\end{equation}

The following theorem of  Silverman~\cite{Silverman} bounds the difference between the logarithmic and canonical heights
of rational points in terms
of the logarithmic heights of $j(E)$ and $\Delta(E)$. 

\begin{theorem}[Theorem 1.1 of \cite{Silverman}]\label{Silverman_bounds}
If $P\in E(\Q)$, then
\[
-\tfrac{1}{8}h_W(j(E))-\tfrac{1}{12}h_W(\Delta(E))-0.973\leq \widehat{h}(P)-\frac{1}{2}h_W(P)\leq \tfrac{1}{12}h_W(j(E))+\tfrac{1}{12}h_W(\Delta(E))+1.07.
\]
\end{theorem}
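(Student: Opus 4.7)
The proof strategy follows the standard Néron--Tate decomposition of heights into local contributions. The plan is to write
\[
\widehat{h}(P) - \tfrac{1}{2}h_W(P) = \sum_{v} \bigl(\widehat{\lambda}_v(P) - \tfrac{1}{2}\log\max(|x(P)|_v,1)\bigr),
\]
where $v$ ranges over all places of $\Q$ and $\widehat{\lambda}_v$ denotes the Néron local height at $v$. The goal is then to bound each local difference explicitly in terms of local invariants of $E$ (Kodaira reduction type, period lattice) and to sum the resulting bounds over all $v$.

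First I would handle the non-archimedean places. Working with a minimal Weierstrass model, at primes $p$ of good reduction the Néron local height agrees with $\tfrac{1}{2}\log\max(|x(P)|_p,1)$, so these places contribute nothing. At primes of bad reduction, the difference $\widehat{\lambda}_p(P) - \tfrac{1}{2}\log\max(|x(P)|_p,1)$ is bounded by an explicit multiple of $v_p(\Delta(E))\log p$, where the multiplier depends only on the Kodaira type (determined by Tate's algorithm). Summing over the finitely many bad primes and bookkeeping coefficients produces the $\tfrac{1}{12}h_W(\Delta(E))$ term on both sides of the bound.

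For the archimedean place, one uses the uniformization $E(\C) \cong \C/\Lambda$ to express $\widehat{\lambda}_\infty(P)$ in terms of $-\log|\sigma_\Lambda(z)|$ plus a quadratic correction in the elliptic logarithm $z$, where $\sigma_\Lambda$ is the Weierstrass sigma function. The comparison with $\tfrac{1}{2}\log\max(|x(P)|,1)$ reduces to purely analytic estimates: one bounds $\log|\sigma_\Lambda(z)|$ via its $q$-expansion on the standard $\SL_2(\Z)$-fundamental domain, controls the quasi-period $\eta_1$ using the Eisenstein series $E_2$, and invokes the identity $j(\tau) = 1728\,g_2^3/\Delta$ to re-express the resulting estimates in terms of $h_W(j(E))$. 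Careful numerical optimization yields the coefficients $\tfrac{1}{8}$ and $\tfrac{1}{12}$ of $h_W(j(E))$ in the lower and upper bounds respectively, with the residual constants $-0.973$ and $1.07$ absorbing the tail of the $q$-series together with the explicit estimates for $|E_2(\tau)|$.

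The main obstacle is precisely this archimedean analytic step: producing sharp, fully explicit bounds on $\log|\sigma_\Lambda(z)|$ that are uniform in $z \in \C$ and in $\tau$ ranging over the standard fundamental domain, while simultaneously tracking the quasi-periodicity correction. Any looseness in the intermediate Eisenstein-series estimates propagates directly into worse final constants, so the bulk of the work lies in the numerical tightening of each of these inequalities. The non-archimedean side, by contrast, is essentially algorithmic once Tate's algorithm has been applied at each prime dividing $\Delta(E)$.
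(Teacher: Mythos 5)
The paper does not prove this statement; it is imported (up to the normalization $\widehat h(P)=\tfrac12\lim h_W(nP)/n^2$) from Silverman's 1990 paper cited as \cite{Silverman}, so there is no internal argument to compare yours against. Judged against the actual proof in that source, your outline has the right architecture: Silverman does decompose $\widehat h$ into N\'eron local heights, does bound the archimedean discrepancy via $q$-expansions of the relevant sigma/theta function over the standard fundamental domain, and does bound the non-archimedean discrepancies place by place before summing. Two reservations. First, the theorem is stated, and is used in this paper, for the given integral model $y^2=x^3+a_4x+a_6$, which need not be minimal; Silverman's local estimates are derived for an \emph{arbitrary} Weierstrass equation, in terms of $\ord_p(\Delta)$ and $\ord_p(j)$, precisely to avoid passing to a minimal model. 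Your step ``working with a minimal Weierstrass model'' changes both $h_W(P)$ (via the substitution $x=u^2x'+r$) and $h_W(\Delta)$ (by $12\log|u|$), so as written it would prove a bound for the minimal model rather than the stated one, and an additional comparison of heights across models would be needed; relatedly, the clean split you propose (all of the $h_W(\Delta)$ term from the finite places, all of the $h_W(j)$ term from the archimedean place) is not how the bookkeeping in \cite{Silverman} actually closes, since the finite places where $j$ is non-integral also contribute. Second, and more importantly, everything quantitative --- the explicit local inequalities, the uniform estimates for $\log|\sigma_\Lambda(z)|$ and the quasi-period, and the constants $\tfrac18$, $\tfrac1{12}$, $0.973$, $1.07$ --- is deferred to ``careful numerical optimization,'' and that is the entire content of the theorem. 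So what you have is a correct roadmap to the cited proof rather than a proof.
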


Asymptotics for the number of rational points on an elliptic curve with bounded height are well known
(for example, see \cite[Prop 4.18]{Knapp}). If
 $E(\Q)$ has rank $r\geq 1$ and
$\Omega_r:=\pi^{\frac{r}{2}}/\Gamma\left (\frac{r}{2}+1\right)$,  then in terms of the
 regulator $R_{\Q}(E)$ and $|E_{\tor}(\Q)|$, we have
\begin{equation}
\# \{P\in E(\Q) \mid \widehat h(P) \leq T/4\}
\sim \frac{|E_{\tor}(\Q)|}{2^r\sqrt{R_{\Q}(E)}} \cdot \Omega_r T^{\frac{r}{2}}.
\end{equation}

To prove Theorem~\ref{General}, 
we require effective lower bounds for the number of points with bounded height, which is essentially
the problem of counting lattice points in $r$-dimensional spheres.
To this end, we let $B(R)$ denote the closed ball in $\R^r$ of radius $R$ centered at the origin.
Furthermore, if $\mathcal P$ is any parallelepiped,  then let $d(\mathcal P)$ denote its (squared) diameter,  the largest square-distance between any two vertices. 
In our setting, if $\{P_1,\dots,P_r\}$ is a basis of $E(\Q)/E_{\tor}(\Q)$, then the (squared) diameter is 
\begin{equation}\label{diameter}
d(E):=\max_{\delta_i\in \{\pm1,0\}} 2\widehat h\left(\sum_{i=1}^r\delta_iP_i\right ).
\end{equation}
This is the diameter of the parallelepiped in $\R^r$ constructed from vectors $\textbf{v}_1,\dots \textbf{v}_r,$ where $\textbf{v}_i\cdot \textbf{v}_j=\langle P_i,P_j\rangle:=
{\color{black}\frac{1}{2}\left(\widehat{h}(P_i+P_j)-\widehat{h}(P_i)-\widehat{h}(P_j)\right).}$

\begin{lemma}\label{PropRawBounds} Let $\Lambda$ be a lattice in $\R^r$ of full rank, and let $\mathcal P$ be any fundamental parallelepiped of $\Lambda$. If $T>4d(\mathcal P),$ then we have 
\[
\left| \frac{2^r\Vol{\mathcal P}}{\Omega_r}\cdot \# \{\Lambda \cap B(\tfrac12 T^{\frac12})\} - T^{\frac{r}{2}}\right|\ \leq  \ \ 3^rT^{\frac{r-1}{2}}d(\mathcal P)^{\frac{1}{2}}.
\]
\end{lemma}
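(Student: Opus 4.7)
The plan is to prove this by the classical volume-packing sandwich for lattice point counting. Set $R := T^{1/2}/2$, so the ball in question is $B(R)$. Since $\mathcal P$ is a fundamental parallelepiped for $\Lambda$, its translates $\{\lambda + \mathcal P : \lambda \in \Lambda\}$ tile $\R^r$ with pairwise disjoint interiors, and, by the definition of $d(\mathcal P)$ as the squared diameter, every point of $\lambda + \mathcal P$ lies within Euclidean distance $\sqrt{d(\mathcal P)}$ of its ``corner'' $\lambda$.

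First, I would form the union $U := \bigcup_{\lambda \in \Lambda \cap B(R)} (\lambda + \mathcal P)$. Disjointness of the tiles gives $\Vol(U) = \#\{\Lambda \cap B(R)\}\cdot\Vol(\mathcal P)$, while the diameter estimate combined with the triangle inequality sandwiches $U$ between two concentric balls:
\[
B\!\bigl(R - \sqrt{d(\mathcal P)}\bigr) \ \subseteq\ U \ \subseteq\ B\!\bigl(R + \sqrt{d(\mathcal P)}\bigr).
\]
The hypothesis $T > 4d(\mathcal P)$ is exactly what is needed to guarantee $R > \sqrt{d(\mathcal P)}$, so the inner ball is nonempty. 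Using $\Vol(B(\rho)) = \Omega_r \rho^r$ and multiplying through by $2^r/\Omega_r$ then yields the two-sided estimate
\[
\bigl(T^{1/2} - 2\sqrt{d(\mathcal P)}\bigr)^r \ \leq\ \frac{2^r\Vol(\mathcal P)}{\Omega_r}\,\#\{\Lambda \cap B(R)\} \ \leq\ \bigl(T^{1/2} + 2\sqrt{d(\mathcal P)}\bigr)^r.
\]

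To finish, I would set $a := T^{1/2}$ and $b := 2\sqrt{d(\mathcal P)}$, so that $0 < b < a$ by hypothesis. A binomial expansion bounds each of $(a+b)^r - a^r$ and $a^r - (a-b)^r$ by
\[
\sum_{k=1}^{r}\binom{r}{k} a^{r-k} b^k \ \leq\ b\,a^{r-1}\sum_{k=1}^{r}\binom{r}{k} \ =\ (2^r - 1)\,b\,a^{r-1},
\]
where the first inequality uses $b^k \leq a^{k-1}b$ for $0 < b \leq a$. The elementary check $2(2^r - 1) \leq 3^r$, valid for every $r \geq 1$, upgrades this to $\tfrac{1}{2}\cdot 3^r\,b\,a^{r-1} = 3^r T^{(r-1)/2}\sqrt{d(\mathcal P)}$, the claimed error. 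There is no serious obstacle here: the result is a clean geometry-of-numbers estimate, and the only delicate point is tightening the binomial bound enough to land on the clean factor $3^r$ rather than some messier expression.
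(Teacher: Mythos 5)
Your proof is correct, and it rests on the same basic idea as the paper's: sandwich the number of lattice points in $B(\tfrac12 T^{1/2})$ between volumes of two nearby balls via the tiling by translates of $\mathcal P$, then clean up the binomial error. The one genuine (but minor) difference is the centering convention. The paper shifts the sandwiching balls by $\mathbf{w} = \sum_i \tfrac12 \mathbf{v}_i$, the barycenter of $\mathcal P$, so that each tile $\mathcal P_\lambda$ lies within distance $\tfrac12 d^{1/2}$ of $\lambda + \mathbf{w}$; this gives the tighter sandwich $(T^{1/2}-d^{1/2})^r \le \frac{2^r\Vol\mathcal P}{\Omega_r}\,\# \le (T^{1/2}+d^{1/2})^r$, which the paper then finishes with the convexity inequality $(x+y)^r < x^r + b^{-1}x^{r-1}y(1+b)^r$ at $b=\tfrac12$. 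You instead anchor each tile at its corner $\lambda$, so every point of $\lambda+\mathcal P$ is within distance $d^{1/2}$ of $\lambda$, yielding the looser sandwich $(T^{1/2}\pm 2d^{1/2})^r$; you then recover the constant $3^r$ by the direct binomial estimate $\sum_{k\ge1}\binom{r}{k}a^{r-k}b^k \le (2^r-1)\,a^{r-1}b$ together with $2(2^r-1)\le 3^r$. Both routes need exactly the hypothesis $T>4\,d(\mathcal P)$ and land on the stated bound, so this is a cosmetic rather than structural departure; the paper's centering is just a small optimization your argument doesn't require.
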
 

\begin{proof}
Let $\left\{\mathbf v_1,\mathbf v_2,\dots, \mathbf v_r\right\}$ be a basis for $\Lambda,$
and let $\mathbf{w}:=\sum_{i=1}^r \tfrac12 \mathbf{v}_i.$  For each point $\lambda \in \Lambda$, let $P_\lambda$ be the half-open parallelepiped given by 
\[\mathcal P_\lambda=\left \{\lambda +\sum_{i=1}^r x_i \mathbf{v}_i ~\mid~ x_i\in [0,1)\right\}.\]
If $\mathcal P_\lambda$ intersects the shifted ball $B(\tfrac12 T^{\frac{1}{2}}-\tfrac12d^{\frac{1}{2}})+\mathbf{w},$ then $\lambda\in B(\tfrac12 T^{\frac{1}{2}}).$  Therefore, we have
\begin{eqnarray*}
\# \left(\Lambda \cap B(\tfrac12 T^{\frac{1}{2}}) \right)&\geq \frac{\Vol \left(B(\tfrac12 T^{\frac{1}{2}}-\tfrac12d^{\frac{1}{2}})\right)}{\Vol (\mathcal P_\lambda)}
= \frac{\Omega_r}{2^r\Vol (\mathcal P_\lambda)}\cdot \left(T^{\frac{1}{2}}- d^{\frac{1}{2}} \right)^r.
\end{eqnarray*}
On the other hand, If $\lambda\in B(\tfrac12 T^{\frac{1}{2}})$, then $\mathcal P_\lambda$ is contained in the shifted ball $B(\tfrac12 T^{\frac{1}{2}}+\tfrac12d^{\frac{1}{2}})+\mathbf{w}.$ Therefore, we have
\begin{eqnarray*}
\# \left(\Lambda \cap B(\tfrac12 T^{\frac{1}{2}}) \right)&\leq \frac{\Vol \left(B(\tfrac12 T^{\frac{1}{2}}+\tfrac12d^{\frac{1}{2}})\right)}{\Vol (\mathcal P_\lambda)}
= \frac{\Omega_r}{2^r\Vol (\mathcal P_\lambda)}\cdot \left(T^{\frac{1}{2}}+ d^{\frac{1}{2}} \right)^r.
\end{eqnarray*}
We now apply the approximation 
\[\left(x+y\right)^r\leq x^r+ b^{-1} x^{r-1}y\left(\left(1+b\right)^r-1\right)<x^r+ b^{-1}x^{r-1}y\left(1+b\right)^r,\] whenever $x$ and $y$ are positive and $0<y/x<b<1.$ By hypothesis, we have $\sqrt{T}>2\sqrt{d(\mathcal P)},$  and
so the conclusion follows by letting  $b=1/2.$
\end{proof}

We use this lemma to count bounded height rational points on an elliptic curve, whose coordinates have denominators that  satisfy certain coprimality conditions.
Namely, suppose that
 $Q_{-D}=\left(\frac{u}{w^2}, \frac{v}{w^3}\right)\in E_{-D}(\Q)$ is as in Theorem~\ref{General}.
Recalling the constant $c(E,Q_{-D})$ defined in (\ref{cED}), we have the following
lemma which counts the points with bounded height on $E(\Q)$ with denominators that are coprime to $w$.

 \begin{lemma}\label{PropRawBoundsCop}
 Assume the notation and hypotheses in Theorem~\ref{General}. If  
$T>4d(E),$ then 
\begin{align*}
\# \{P=\left(\tfrac{A}{C^2},\tfrac{B}{C^3}\right)\in E(\Q) \mid \widehat h(P) \leq T/4 \, \ &{\text{{\rm and}}} \, \gcd(C,w)=1 \}\\
&\geq c(E,Q_{-D})\cdot T^{\frac{r}{2}}-\widehat c(E,Q_{-D})\cdot T^{\frac{r-1}{2}}.
\end{align*}
 \end{lemma}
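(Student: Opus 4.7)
The plan is to apply Lemma~\ref{PropRawBounds} to the canonical-height lattice of $E(\Q)$ and then sieve over $w$ to enforce the coprimality condition $\gcd(C,w)=1$.

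First, I would identify $E(\Q)/E_{\tor}(\Q)$ with a full-rank lattice $\Lambda\subset\R^r$ via the canonical height pairing, so that $\widehat{h}(P)=|\mathbf{v}_P|^2$; then the condition $\widehat{h}(P)\le T/4$ is precisely $\mathbf{v}_P\in B(T^{1/2}/2)$. A fundamental parallelepiped $\mathcal{P}$ built from a Mordell--Weil basis has $\Vol(\mathcal{P})=\sqrt{R_{\Q}(E)}$ and squared diameter $d(\mathcal{P})=d(E)$ by \eqref{diameter}. Applying Lemma~\ref{PropRawBounds} to $\Lambda$ and multiplying by $|E_{\tor}(\Q)|$ to account for torsion cosets gives the count of all $P\in E(\Q)$ with $\widehat{h}(P)\le T/4$ as a main term of order $2c(E)T^{r/2}$ plus an error of order $3^r\sqrt{d(E)}\,c(E)\,T^{(r-1)/2}$.

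Next comes the sieve. For each prime $p\mid w$, the condition $p\mid C$ on $P=(A/C^2,B/C^3)$ is equivalent to $P$ reducing to the identity modulo $p$, so the bad points at $p$ form a sublattice $\Lambda_p\subseteq\Lambda$ whose index is the order of the image of reduction and therefore divides $|E(\F_p)|$. For each squarefree $d\mid w$, set $\Lambda_d:=\bigcap_{p\mid d}\Lambda_p$, a sublattice of index dividing $\prod_{p\mid d}|E(\F_p)|$. By M\"obius inversion,
\[
\#\bigl\{P:\widehat{h}(P)\le T/4,\ \gcd(C,w)=1\bigr\} \;=\; \sum_{\substack{d\mid w\\ d\text{ squarefree}}}\mu(d)\,\#\bigl(\Lambda_d\cap B(T^{1/2}/2)\bigr).
\]
Each right-hand term is then estimated by applying Lemma~\ref{PropRawBounds} to $\Lambda_d$, where I would choose a basis compatible with the elementary-divisor decomposition of $\Lambda_d$ inside $\Lambda$ so that the diameter of $\Lambda_d$'s fundamental parallelepiped stays controlled in terms of $d(E)$. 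The main terms assemble via the Euler-product identity $\sum_{d\mid w}\mu(d)/[\Lambda:\Lambda_d]$ into the factor $\prod_{p\mid w}(1-|E(\F_p)|^{-1})$, promoting $c(E)$ to $c(E,Q_{-D})$, while each sieve term's error contributes a factor of order $r\cdot 3^r\sqrt{d(E)}\,c(E)$ and summing over the $\mathcal{S}(w)$ squarefree divisors of $w$ introduces the $\mathcal{S}(w)$ factor appearing in $\widehat{c}(E,Q_{-D})$.

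The principal obstacle is the simultaneous control of all the sublattices in the sieve: a naive choice of basis for $\Lambda_d$ can make the diameter of its fundamental parallelepiped much larger than $d(E)$ (the index $[\Lambda:\Lambda_d]$ can grow with $d$), so the elementary-divisor construction must be executed carefully, and the alternating cancellations in the M\"obius sum must be used to absorb the residual discrepancies and collapse the full count cleanly into the claimed main term $c(E,Q_{-D})T^{r/2}$ plus error $\widehat{c}(E,Q_{-D})T^{(r-1)/2}$. This combinatorial/geometric bookkeeping across the $2^{\omega(w)}$ squarefree divisors, together with the uniform index bound $[\Lambda:\Lambda_d]\le\prod_{p\mid d}|E(\F_p)|$, is the heart of the argument.
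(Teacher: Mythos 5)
Your proposal takes essentially the same route as the paper: embed $E(\Q)/E_{\tor}(\Q)$ into $\R^r$ via the canonical height pairing, observe that the points with $p\mid C$ are the kernel of reduction mod $p$ and hence give a sublattice $\Lambda_p$, run a M\"obius sieve over squarefree $d\mid w$, and apply Lemma~\ref{PropRawBounds} to each $\Lambda_d$. This is exactly the paper's argument, so there is nothing structurally new to compare.

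Two points that you flag but do not quite close, and which the paper handles cleanly. First, the ``simultaneous control of the sublattices'' you worry about resolves itself: the error term produced by Lemma~\ref{PropRawBounds} applied to $\Lambda_d$ is $3^r T^{(r-1)/2}\sqrt{d_d}$ against a normalizing factor $\Omega_r/(2^r\Vol\mathcal P_d)$, and since $\Vol\mathcal P_d=[\Lambda_1:\Lambda_d]\Vol\mathcal P_1$ while one can choose $\mathcal P_d$ with $\sqrt{d_d}\le[\Lambda_1:\Lambda_d]\sqrt{d_1}$, the index cancels, so every squarefree $d\mid w$ contributes the same error and the total is just $\mathcal S(w)=2^{\omega(w)}$ copies of the single-lattice error; no delicate elementary-divisor bookkeeping is needed. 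Second, you state only that the index of $\Lambda_p$ divides $|E(\F_p)|$, yet then assert the Euler product assembles to $\prod_{p\mid w}(1-|E(\F_p)|^{-1})$. That requires the equality $[\Lambda_1:\Lambda_p]\cdot|E_{\tor}(\Q)|=|E(\F_p)|$ that the paper writes down explicitly (together with Nagell--Lutz to see that $E_{\langle n\rangle}\cap E_{\tor}(\Q)$ is trivial for $n>1$, which is also needed for $\psi$ to be injective on $E_{\langle n\rangle}$ and to explain why $|E_{\tor}(\Q)|$ multiplies only the $d=1$ term). With divisibility alone the sieve would yield $\prod_p(1-1/I_p)$ with $I_p\le|E(\F_p)|$, which is a weaker lower bound than the one claimed; you need to match the paper's equality here.
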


\begin{proof}
Let $\mathcal B:=\{P_1,\dots,P_r\}$ be any basis for $E(\Q)$, and consider linearly independent vectors $v_1,v_2,\dots, v_r\in \R^r$ for which $v_i\cdot v_j=\langle P_i,P_j\rangle.$ Let $\psi:E(\Q) \to \R^r$ be the additive homomorphism defined so that $\psi(P_i)=v_i$, and $\psi(E_{\tor}(\Q))$ is the origin. 

For any integer $n$, let 
\begin{equation}
E_{\langle n\rangle}:=\{P=\left(\tfrac{A}{C^2},\tfrac{B}{C^3}\right)\in E(\Q) \mid %\widehat h(P) \leq T \ \  \text{and} \ \ 
C\equiv0~(\mathrm{mod~} n) \}.
\end{equation}
If $p$ is prime, then $E_{\langle p\rangle}$ is the kernel of the reduction modulo  $p$  map $E(\Q)\to E(\F_p),$ which includes the point at infinity. This set is closed under addition, and the image 
$\Lambda_p:=\psi(E_{\langle p\rangle})$ is a lattice. If $p$ is a prime of good reduction for $E$, then this follows since the reduction map is a group homomorphism. Otherwise, the claim is straightforward to confirm directly with the definition of the group law. 
More generally, $\Lambda_n:=\psi(E_{\langle n\rangle})$ is a lattice for any square-free integer $n$, since 
$\Lambda_n=\bigcap_{p\mid n}\Lambda_p.$

By the Nagell-Lutz Theorem, $E_{\langle n\rangle} \cap E_{\tor}(\Q)$ is trivial if $n>1$, and so $\psi$ is injective on $E_{\langle n\rangle}$. Thus, by an inclusion/exclusion argument we have that 
\begin{multline}\label{EqIncExc1}
\# \{P=\left(\tfrac{A}{C^2},\tfrac{B}{C^3}\right)\in E(\Q) \mid \widehat h(P) \leq T/4 \, \text{and} \, \gcd(C,w)=1 \}\\
=|E_\tor(\Q)| \cdot \# (\Lambda_1\cap B(\tfrac12 T^{\frac12})) \,-\, \sum_{p\mid w} \# (\Lambda_p\cap B(\tfrac12 T^{\frac12})) \, +\,
\sum_{\substack{p,q\mid w\\p\neq q}} \# (\Lambda_{pq}\cap B(\tfrac12 T^{\frac12})) \,-\, \dots,
\end{multline}
where the sums are over prime divisors of $w$. 

By Lemma~\ref{PropRawBounds}, 
we have that 
\[
\left| \frac{2^r\Vol({\mathcal P_n})}{\Omega_r}\cdot \# \{\Lambda \cap B(\tfrac12 T^{\frac12})\} - T^{\frac{r}{2}}\right|\ \leq  \ \ 3^rT^{\frac{r-1}{2}}d_n^{\frac{1}{2}} ,
\]
where $d_n$ is the minimum diameter of any choice of $\mathcal P_n$. Note that if $\mathcal P_n$ is any parallelepiped for $\Lambda_n$, then 
\[\Vol(\mathcal P_n)=[\Lambda_1:\Lambda_n]\Vol(\mathcal P_1).\]
Moreover, since $E_{\langle n\rangle} \cap E_{\tor}(\Q)$ is trivial for $n>1$, we have that 
\[
[\Lambda_1:\Lambda_n]\cdot |E_{\tor}(\Q)|=|E(\Q) / E_{\langle n\rangle}|=\prod_{p\mid n} |E(\F_p)|.
\]
Together, these two equations give that 
\[\frac{\Omega_r}{\Vol(\mathcal P_n)}=\frac{\Omega_r\cdot E_{\tor}(\Q)}{\Vol(\mathcal P_1)\cdot \prod_{p\mid n} |E(\F_p)|}.\]
We may choose $\mathcal P_n$ so that  
\[\sqrt{d_n}\leq [\Lambda_1:\Lambda_n] \sqrt{d_1}.\]
Together with (\ref{EqIncExc1}), these imply that 

\begin{multline}\label{EqIncExc2}
\# \{P=\left(\tfrac{A}{C^2},\tfrac{B}{C^3}\right)\in E(\Q) \mid \widehat h(P) \leq T/4\, \text{and} \, \gcd(C,w)=1 \}\\
\geq \frac{|E_{\tor}(\Q)|}{2^r\sqrt{R_{\Q}(E)}} \cdot \Omega_r
\cdot\left( T^{\frac r2}\cdot \prod_{\substack{p \text{ prime}\\p\mid w}}\left(1-\frac{1}{|E(\F_p)|}\right)
-  {3}^r2^{\omega(w)}T^{\frac{r-1}{2}}\sqrt{d_1}
\right),
\end{multline}
where $\omega(w)$ is the number of distinct prime factors of $w$. Since $2^{\omega(w)}=\mathcal S(w),$ the lemma follows.
\end{proof}

These same arguments can be used to give lower bounds for the number of points of bounded height generated
from any linearly independent points in $E(\Q)$.

\begin{lemma}\label{PropBoundsApprx} Assume the notation and hypotheses above.
Suppose $G$ is a subgroup of $E_{\tor}(\Q)$, and that $\mathcal B:=\{P_1,\dots,P_m\}$ is a set of linearly independent points in $E(\Q)$ listed in ascending order by height. If $T>4d(\mathcal{B}),$ then

\begin{multline*}
\# \{P=\left(\tfrac{A}{C^2},\tfrac{B}{C^3}\right)\in E(\Q) \mid \widehat h(P) \leq T/4 \, \text{and} \, \gcd(C,w)=1 \}\\
\geq \frac{|G|}{ 2^r\sqrt{\widehat h(P_m)^m}}\cdot \Omega_m \left(T^{\frac{m}{2}}\cdot \prod_{\substack{p \text{ prime}\\p\mid w}}\left(1-\frac{1}{|E(\F_p)|}\right) -{3}^mm^2\mathcal S(w)\sqrt{2\widehat h(P_m)}T^{\frac{m-1}{2}} \right).
\end{multline*}
\end{lemma}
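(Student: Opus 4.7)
The plan is to repeat the argument of Lemma~\ref{PropRawBoundsCop} verbatim, but applied to the finitely generated subgroup $H := \langle G, P_1,\dots, P_m\rangle \subseteq E(\Q)$ rather than to all of $E(\Q)$. Since the desired lower bound only counts points in the larger group $E(\Q)$, any lower bound for the count within $H$ suffices.

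First I would choose linearly independent vectors $\mathbf v_1,\dots,\mathbf v_m\in\R^m$ with $\mathbf v_i\cdot\mathbf v_j=\langle P_i,P_j\rangle$, define a homomorphism $\psi\colon H\to\R^m$ sending $P_i\mapsto \mathbf v_i$ and killing $G$, and set $\Lambda:=\psi(H)$. This is a full-rank lattice, and a fundamental parallelepiped for $\Lambda$ has volume $\sqrt{R'}$, where $R':=\det(\langle P_i,P_j\rangle)$ is the regulator of the sublattice generated by $\mathcal B$. Next I would set $H_{\langle n\rangle}:=H\cap E_{\langle n\rangle}$ and $\Lambda_n:=\psi(H_{\langle n\rangle})$, and run the same inclusion--exclusion over squarefree divisors of $w$ that appears in \eqref{EqIncExc1}. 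The key identity needed is
\[
[\Lambda:\Lambda_n]\cdot |G|\;=\;|H/H_{\langle n\rangle}|\;\leq\; \prod_{p\mid n}|E(\F_p)|,
\]
where the inequality comes from the injection $H/H_{\langle n\rangle}\hookrightarrow E(\Q)/E_{\langle n\rangle}\hookrightarrow \prod_{p\mid n}E(\F_p)$; by Nagell--Lutz, $\psi$ is injective on $H_{\langle n\rangle}$ for $n>1$, so this count is faithful.

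Applying Lemma~\ref{PropRawBounds} to each $\Lambda_n$ (which is legitimate once $T>4d(\mathcal B)$, since $d(\Lambda_n)$ only grows with the index) and reassembling the inclusion--exclusion, as in the derivation of \eqref{EqIncExc2}, produces a main term
\[
\frac{|G|\,\Omega_m}{2^m\sqrt{R'}}\cdot T^{m/2}\prod_{p\mid w}\Bigl(1-\tfrac{1}{|E(\F_p)|}\Bigr)
\]
and an error term bounded by $\frac{|G|\,\Omega_m}{2^m\sqrt{R'}}\cdot 3^m\,m\,\mathcal S(w)\sqrt{d(\mathcal B)}\,T^{(m-1)/2}$. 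Two elementary bounds then convert this into the stated form: by Hadamard's inequality applied to the Gram matrix $(\langle P_i,P_j\rangle)$, together with the ordering $\widehat h(P_1)\leq\dots\leq\widehat h(P_m)$,
\[
R'\;\leq\;\prod_{i=1}^m \widehat h(P_i)\;\leq\;\widehat h(P_m)^m,
\]
so $\sqrt{R'}\leq\widehat h(P_m)^{m/2}$; and by the triangle inequality for the height norm $\sqrt{\widehat h(\cdot)}$,
\[
\sqrt{\widehat h\Bigl(\sum_{i=1}^m\delta_iP_i\Bigr)}\;\leq\;\sum_{i=1}^m\sqrt{\widehat h(P_i)}\;\leq\; m\sqrt{\widehat h(P_m)},
\]
so $\sqrt{d(\mathcal B)}\leq m\sqrt{2\widehat h(P_m)}$. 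Substituting these into the expression above gives precisely the stated inequality.

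The step I expect to be least mechanical is the verification of the index identity $[\Lambda:\Lambda_n]\cdot|G|=|H/H_{\langle n\rangle}|$, which relies on $\psi$ being injective on each $H_{\langle n\rangle}$ (for $n>1$) and on kernel-of-reduction lattices combining correctly under intersection across different primes; these are the same subtleties handled in Lemma~\ref{PropRawBoundsCop}, and the point is only that restricting from $E(\Q)$ to the subgroup $H$ preserves all the needed structure because $H_{\langle n\rangle}=H\cap E_{\langle n\rangle}$ is itself closed under addition and meets the torsion subgroup $G$ trivially for $n>1$. Everything else is bookkeeping with Lemma~\ref{PropRawBounds} and the two inequalities above.
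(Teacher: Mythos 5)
Your overall strategy is the same as the paper's: the published proof consists precisely of the instruction to rerun Lemma~\ref{PropRawBoundsCop}, together with the two numerical modifications you identify, namely the Hadamard bound $\Vol(\mathcal B)=\sqrt{R'}\leq\prod_{i}\widehat h(P_i)^{1/2}\leq\widehat h(P_m)^{m/2}$ for the covolume and the triangle-inequality bound $\sqrt{d(\mathcal B)}\leq m\sqrt{2\widehat h(P_m)}$ for the diameter. Those two estimates are exactly right, and the extra factor of $m$ you carry through the error term only weakens the final inequality, which is harmless for a lower bound.

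The one step that does not close as written is the index computation. You record $[\Lambda:\Lambda_n]\cdot|G|=|H/H_{\langle n\rangle}|\leq\prod_{p\mid n}|E(\F_p)|$ and then assert that the inclusion--exclusion produces the main term $T^{m/2}\prod_{p\mid w}\bigl(1-1/|E(\F_p)|\bigr)$. But your inequality points the wrong way for that conclusion: the subtracted counts in the inclusion--exclusion are $\#(\Lambda_p\cap B)\approx \Omega_m T^{m/2}/\bigl(2^m[\Lambda:\Lambda_p]\Vol(\mathcal P_1)\bigr)$, so a \emph{smaller} index $|H/H_{\langle p\rangle}|$ makes the subtracted term \emph{larger} and the main term \emph{smaller}. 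What the sieve actually yields is $\prod_{p\mid w}\bigl(1-1/|H/H_{\langle p\rangle}|\bigr)$, and to dominate this from below by $\prod_{p\mid w}\bigl(1-1/|E(\F_p)|\bigr)$ one needs $|H/H_{\langle p\rangle}|\geq|E(\F_p)|$, the reverse of the injection you invoke. The paper's proof of Lemma~\ref{PropRawBoundsCop} finesses the analogous point by asserting the equality $|E(\Q)/E_{\langle n\rangle}|=\prod_{p\mid n}|E(\F_p)|$ (i.e.\ surjectivity of reduction); for a proper subgroup $H$ even that is unavailable, since the reductions of $G$ and $P_1,\dots,P_m$ need not generate $E(\F_p)$. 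So either the conclusion should be read with $\prod_{p\mid w}\bigl(1-1/|H/H_{\langle p\rangle}|\bigr)$ in place of the displayed Euler product, or you need a separate argument that the relevant indices are at least $|E(\F_p)|$; as written, the displayed inequality cannot do the job you ask of it.
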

\begin{proof} The proof of Lemma~\ref{PropRawBoundsCop} applies with two modifications.
Note that
${d(\mathcal B)} \leq {2m^2\widehat h (P_m)}$, and that the volume of the parallelepiped for $\mathcal B$ satisfies
$\Vol (\mathcal B)\leq \prod_{i=1}^m \widehat h(P_i)^{1/2}\leq \widehat h(P_r)^{\frac{m}{2}}.$
\end{proof}

\begin{proof}[Proof of Theorem~\ref{General}]
By hypothesis, we have that
\[(|u|+w^2)^2\exp(4\delta(E)+d(E))< D< \frac{(|u|+w^2)^2\max(|u|,w^2)^2}{v^4}.\]
Lemma~\ref{PropRawBoundsCop} implies that
\begin{multline}\label{MainCount}
\# \{P=\left(\tfrac{A}{C^2},\tfrac{B}{C^3}\right)\in E(\Q) \mid \widehat h(P) \leq \tfrac14 T_E(-D,Q_{-D}) \, \text{ and } \, \gcd(w,C)=1 \}\\
\geq c(E,Q_{-D})\cdot  T_E(t)^{\frac{r}{2}}- \widehat{c}(E,Q_{-D})T_E(t)^{\frac{r-1}{2}} .
\end{multline}
We show that points $P_1\neq \pm P_2$ in this set map to inequivalent forms when paired with $Q_{-D}=\left(\tfrac{u}{w^2},\tfrac{v}{w^3}\right)\in E_{-D}(\Q)$.
 
Suppose that  $P_1=(\tfrac{A_1}{C_1^2},\tfrac{B_1}{C_1^3}),  P_2=(\tfrac{A_2}{C_2^2},\tfrac{B_2}{C_2^3})\in E(\Q)$ satisfy $\widehat h(P_i) \leq \tfrac14 T_E(-D,Q)$, and let ${F_1:=F_{P_1,Q_{-D}}(X,Y)}$ and $F_2:=F_{P_2,Q_{-D}}(X,Y)$.
Thanks to Theorem~\ref{Silverman_bounds}, we have  that 
\begin{multline}h_W(P_i)\leq 2\left(\widehat h(P_i)+\tfrac18 h_W(j(E))+\tfrac{1}{12}h_W(\Delta(E))+0.973\right)\\
\leq \tfrac12 \log\left|\frac{D}{(|u|+w^2)^2}\right|-\log(2)=\tfrac12 \log\left|\frac{D}{4(|u|+w^2)^2}\right|.
\end{multline}
We observe that  
$\alpha_i=|A_iw^2-uC_i^2|\leq (|u|+w^2)H(P_i)$. By Theorem~\ref{Silverman_bounds}, we have
\begin{equation}\label{HiBound}
H(P_i)=\exp(h_W(P_i))\leq \frac{\sqrt{D}}{2(|u|+w^2)},\end{equation}
 which gives that $\tfrac{\alpha_i}{G_i}\leq\tfrac12\sqrt{D}.$ Hence, we find that
$\frac{\alpha_1}{G_1}\frac{\alpha_2}{G_2}\leq\tfrac{1}{4}D,$
and so by Theorem~\ref{ThmQF}, $F_1(X,Y)$ and $F_2(X,Y)$ are inequivalent, unless 
\begin{equation}\label{EqnAlph}
\frac{\alpha_1}{G_1}=\frac{|A_1w^2-uC_1^2|}{G_1}=\frac{|A_2w^2-uC_2^2|}{G_2}=\frac{\alpha_2}{G_2}.
\end{equation}

Since $\gcd(A_i,C_i)=\gcd(C_i,w)=1$, we have that $G_i=\gcd(\alpha_i,v^2),$ and so $G_i \leq {v}^2.$
Rearranging (\ref{EqnAlph}), we obtain
\begin{equation}\label{eqnRe}{u}(C_1^2{G_2}\pm C_2^2{G_1})={w^2}(A_1{G_2}\pm A_2{G_1}),
\end{equation}
where both signs are the same.
Since ${u}$ and ${w^2}$ are co-prime, we have that 
\begin{equation}\label{eqnDiv}
 {w^2} \, \mid \, (C_1^2{G_2}\pm C_2^2{G_1}) 
 \ \text{ and } \ {u} \, \mid \, (A_1{G_2}\pm A_2{G_1}).
\end{equation}
However, by hypothesis $D\leq \frac{(|u|+w^2)^2\max(|u|,w^2)^2}{v^4},$ and so, combined with (\ref{HiBound}), we find that 
\[|A_i|, C_i^2 \leq H(P_i)< \frac{\max(|u|,w^2)}{2v^2}.\]
This gives that  
\[
|C_1^2{G_2}\pm C_2^2{G_1}| \text{ and } |A_1{G_2}\pm A_2{G_1}| < {\max(|u|,w^2)}.
\]
However,  the divisibility conditions in (\ref{eqnDiv}) imply that at least one of $(C_1^2{G_2}\pm C_2^2{G_1})$ and $(A_1{G_2}\pm A_2{G_1})$ is zero, and therefore, by (\ref{eqnRe}), both are zero. Then we have that $A_1{G_2} =\pm A_2{G_1}$, and $C_1^2{G_2}=\pm C_2^2{G_1}$, where once again both signs are the same. Dividing these terms gives that
$\frac{A_1}{C_1^2}=\frac{A_2}{C_2^2},$ which implies that $P_1=\pm P_2.$ This explains the extra factor of $1/2$ which appears in (\ref{cE}).
This completes the proof.
\end{proof}

\section{An auxilary Diophantine result}\label{DiophantineResult}

Theorems~\ref{MainTheorem}
involves the quadratic twists of the elliptic curves
$$
E^{(a)}: \ \ y^2=x^3-a.
$$ 
Here we prove an auxiliary Diophantine result (see Theorem~\ref{summatory}), motivated by these curves, which will play a central role in the proof of Theorem~\ref{MainTheorem}.
To make this precise, in this section we fix a curve $E^{(a)},$ where $a$ is a positive integer, and
we let $N^{(a)}$ denote its conductor, which is well known to be a multiple of $3$.

Fix an arithmetic progression $h\pmod{4 N^{(a)}},$ where $\gcd(h,4N^{(a)})=1.$  As $X\rightarrow +\infty,$ we aim to count the number of square-free  $0<d<X$ for which there are integer triples
$(m,n,t)$ with
\begin{equation}\label{ranges1}
-dt^2=m^3-an^6,
\end{equation}
where 
\begin{equation}\label{ranges2}
\gcd(t,6am)=1,\ \gcd(n,am)=1,\ m\equiv h\!\!\! \!\pmod{4N^{{(a)}}} {\text {\rm~and~}}n\equiv0\! \!
\! \pmod{4N^{(a)}},
\end{equation}
\begin{equation}\label{ranges3}
T\leq t\leq 2T,\ \ \ M\leq m\leq 2M,\ \ \ N\leq n \leq 2N,\\
\end{equation} 
where\footnote{The function $M_a(X)$ does not depend on the choice of $a$.} $M=M_a(X):=\frac{1}{4}T(X)^{A}\cdot X^{\frac{1}{3}}$ and $N=N_a(X):=\frac{1}{2}{a}^{-\frac{1}{6}}T(X)^{B}\cdot X^{\frac{1}{6}}.$  
Here we assume that
$T:=T(X)$ is a non-decreasing function from $\R^{+}\mapsto [1,\infty),$
and we require that
\begin{equation}\label{ranges4}
0<A<2B < \frac{2}{3}.
\end{equation} 
For  large $X$,  this last  condition  guarantees that the square-free $d$ in (\ref{ranges1}) satisfies $0<d<X.$

For positive square-free integers $d$, we let 
\begin{equation}
N^{(a)}_{h}(d;X,T):=\# \{ (m,n,t)  \ {\text {\rm satisfying}}\ (\ref{ranges1}-\ref{ranges4})\}.
\end{equation}
Theorems~\ref{MainTheorem} will be obtained  from the following summatory asymptotic 
for $N_h^{(a)}(d;X,T).$

\begin{theorem}\label{summatory}
Assume the notation and hypotheses above. As $X\rightarrow +\infty,$ we have
$$
\sum_{1\leq d\leq X} N^{(a)}_{h}(d;X,T)
\asymp_{a} 
X^{\frac{1}{2}}T^{A+B-1} + o_{a,\varepsilon}(X^{\frac{1}{3}+\varepsilon}T^{A+1}).
$$
\end{theorem}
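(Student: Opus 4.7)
Since $d=(an^6-m^3)/t^2$ is uniquely determined by the triple $(m,n,t)$, the left-hand side equals
$$\#\{(m,n,t):(\ref{ranges2})\text{--}(\ref{ranges3})\text{ hold},\ t^2\mid(an^6-m^3),\ d\in[1,X]\text{ squarefree}\}.$$
The range hypothesis $0<A<2B<2/3$ forces $d\le T^{6B-2}X\le X$ automatically (since $B<1/3$), and $an^6>m^3$ outside an $(m,n)$-boundary of size $O(M+N)$, which is of lower order.  I impose the squarefree condition via M\"obius, $\mathbf{1}[d\text{ squarefree}]=\sum_{k\ge1}\mu(k)\mathbf{1}[k^2\mid d]$; the $k=1$ term will supply the main contribution, while the terms for $k\ge2$ (effectively replacing $t$ by $kt$) sum geometrically and only modify the overall multiplicative constant.

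My main step is to fix a pair $(m,t)$ with $m\in[M,2M]$, $t\in[T,2T]$ satisfying all conditions not involving $n$, and count valid $n\in[N,2N]$.  Since $\gcd(mt,6a)=1$, the polynomial congruence $an^6\equiv m^3\pmod{t^2}$ has at most $6^{\omega(t)}=O(t^\varepsilon)$ solutions $n_0\pmod{t^2}$, by CRT and the standard bound $\#\{x:x^6\equiv c\pmod{p^k}\}\le 6$ for primes $p\nmid 6$.  Each residue class, combined with $n\equiv 0\pmod{4N^{(a)}}$ and $\gcd(n,am)=1$, yields $N/(4N^{(a)}t^2)+O(1)$ integers in $[N,2N]$.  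Denoting the admissible residue count by $\nu_m(t^2)=O(t^\varepsilon)$, the $O(1)$ boundary errors accumulate to
$$\sum_{M\le m\le 2M}\sum_{T\le t\le 2T}O(t^\varepsilon)=O(MT\cdot X^\varepsilon)=O(X^{1/3+\varepsilon}T^{A+1}),$$
matching the claimed error bound.

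For the main term, I average $\nu_m(t^2)$ over $m\equiv h\pmod{4N^{(a)}}$ in $[M,2M]$.  The local factor at each prime $p\mid t$ encodes whether $am^{-3}$ is a sixth-power residue mod $p^{2v_p(t)}$; this is detected by Dirichlet characters of order dividing $6$, and the P\'olya--Vinogradov inequality yields $|\sum_{M\le m\le 2M}\chi(m)|\ll t\log t$ for each nontrivial $\chi$ mod $t^2$, uniformly in $t\in[T,2T]$.  Summing over characters gives
$$\sum_{\substack{M\le m\le 2M\\ m\equiv h\,(4N^{(a)})}}\nu_m(t^2)=\frac{\rho(t^2)M}{4N^{(a)}t^2}+O(t^{1+\varepsilon}),$$
where $\rho(t^2):=\#\{(m_0,n_0)\pmod{t^2}:m_0^3\equiv an_0^6\}$ is multiplicative, and a local Euler-product computation (using $\gcd(t,6a)=1$) gives $\rho(t^2)/t^2\to c_a>0$ on average.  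The main term then becomes
$$\sum_{T\le t\le 2T}\frac{\rho(t^2)\,MN}{(4N^{(a)})^2\,t^4}\asymp_a\frac{MN}{T}\asymp_a X^{1/2}T^{A+B-1},$$
as required.  The principal obstacle will be securing uniform control of the P\'olya--Vinogradov error and of the local densities $\rho(t^2)/t^2$ as $t$ varies over $[T,2T]$, particularly for the exceptional subset of $t$ where $a$ fails to be a sixth power modulo $t^2$; these steps are standard but delicate in the presence of the multiple congruence and coprimality conditions.
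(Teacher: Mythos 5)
Your overall architecture matches the paper's: fix $(m,t)$, count $n$ in $[N,2N]$ via the local solution counts of $an^6\equiv m^3\pmod{t^2}$ (the paper's $\rho_m^{(a)}(t^2)$), average over $m\equiv h\pmod{4N^{(a)}}$ with P\'olya--Vinogradov, and then sum over $t$; your main term $MN/T$ and error budget $O(MTX^{\varepsilon})$ agree with the paper's. But there is a genuine gap in your treatment of the squarefree condition on $d$. You write $\mathbf{1}[d\text{ squarefree}]=\sum_{k}\mu(k)\mathbf{1}[k^2\mid d]$ and assert that the $k\ge 2$ terms ``sum geometrically and only modify the constant.'' This is exactly the hard part of any squarefree sieve, and the assertion fails as stated: the Möbius sum runs over $k\ll\sqrt{X}$, and for $k^2t^2\gg N$ the count of $n\in[N,2N]$ in a residue class mod $k^2t^2$ is not $N/(k^2t^2)+O(1)$ with a usable main term --- the $O(1)$ per class dominates, and summed over $(m,t)$ and over the $\asymp\sqrt{X}$ relevant values of $k$ this error is far larger than $X^{1/2}$. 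The paper is forced to split: it truncates the inclusion--exclusion at primes $p\le\log X$, bounds the contribution of medium primes $\log X<p\le Z$ by the local density $1/p^2$, and for large primes $p>Z$ uses a completely different idea, namely factoring $m^3=an^6-p^2bt^2$ as $(m)^3=(\sqrt{a}n^3+pt\sqrt{b})(\sqrt{a}n^3-pt\sqrt{b})$ in $\Q(\sqrt{a},\sqrt{b})$ and bounding the number of admissible $(n,t,p,b)$ for fixed $m$ by the number of ideal factorizations of $(m)$, which is $O(X^{\varepsilon})$. Without some substitute for this large-prime argument your proof does not close.

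A second, smaller issue: you correctly identify that the main term requires $\sum_{T\le t\le 2T}\rho(t^2)/t^2\asymp_a T$, but you defer it as ``standard but delicate.'' It is in fact where most of the paper's technical work lives. The local factor $\rho_m^{(a)}(p)$ contains a cubic residue condition on $a^{-1}$ that does \emph{not} depend on $m$, so averaging over $m$ alone does not remove it; one must also obtain cancellation when summing over the moduli $t$, and the symbol $\cubic{\cdot}{p}$ is not multiplicative in $p$. The paper resolves this by passing to the genuine cubic character over $\Z[\omega]$ (its Lemma~\ref{CubicDictionary}) and then running a two-sided P\'olya--Vinogradov argument (Lemmas~\ref{SoundLemma1} and~\ref{UglyLemma}), with a split of the divisor sum at a threshold $R_i$ chosen to balance the trivial and character-sum bounds. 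Your proposal names the obstacle but supplies none of this, so as written the main term is also not established.
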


\begin{ThreeRemarks} \ \

\noindent
(1) Theorem~\ref{summatory} illustrates that the
 vast majority of triples $(m,n,t),$ for any given $d,$ have small $t$. If $T(X)=o(X^{\varepsilon}),$ then
 the summation in the theorem is $\gg_{a} X^{\frac{1}{2}-\varepsilon}$.
 Indeed, one can even choose $T(X):=1$ and obtain this asymptotic.
  On the other hand, since
 $-1<A+B-1<0$, the asymptotic is $o(X^{\frac{1}{2}-\varepsilon})$ if $T(X):=X^{C}$ for any positive $C$.
 
 \smallskip
 \noindent
 (2) Assuming the hypotheses of Theorem~\ref{summatory}, an elementary argument (see (\ref{Nbound})) shows that $N_{h}^{(a)}(d;X,T)=O(X^{\varepsilon}).$ Therefore, if $T=T(X)=o(X^{\frac{1}{6(B-2)}})$ (e.g. a log power), then we have
 \begin{equation}\label{refined}
 \# \{ -X<-D<0 \ : \  \exists \left(\frac{u}{w^2},\frac{v}{w^3}\right)\in E^{(a)}_{-D}(\Q)\setminus E^{(a)}_{\tor}(\Q) \ {\text {\rm with}}\
 T\leq v \leq 2T\} \gg_{a,\varepsilon} X^{\frac{1}{2}-\varepsilon}T^{A+B-1}.
 \end{equation}
 
\smallskip
\noindent
(3) 
Soundararajan considered \cite{Soundararajan}  similar Diophantine equations in his work on
torsion in class groups of imaginary quadratic fields, and has results which are analogous to Theorem~\ref{summatory}.\end{ThreeRemarks}

\subsection{The counting function $\rho^{(a)}_m(M)$}

The proof of Theorem~\ref{summatory} requires
the counting function 
\begin{equation}\label{rho}
\rho_{m}^{(a)}(M):= \# \{ n\! \!\!\!\pmod M \ : \ an^6\equiv m^3\pmod{M}\},
\end{equation}
where $a, m$ and $M$ are non-zero integers.
The following lemma gives a closed formula for this function in terms of 
$\mathrm{ord}_{p}(n):=\max\{ t\geq 0 \ : \ p^t\mid n\}$,
Legendre symbols
$\leg{\cdot}{p}$ and the cubic residue symbol
\begin{equation}
\cubic{b}{p}:=\begin{cases} 0 & \ \ \ \ \ {\text {\rm if}}\ p\mid b,\\
-1 & \ \ \ \ \ {\text {\rm if $b$ is not a cubic residue modulo $p$}},\\
1& \ \ \ \ \ {\text {\rm if $b$ is a cubic residue modulo $p$.}}
\end{cases}
\end{equation}

\begin{lemma}\label{rholemma} Assuming the notation above, the following are true.

\noindent
(1) The function $\rho_{m}^{(a)}(M)$ is multiplicative in $M$.

\noindent
(2) If $p=2$, $p\nmid am$ and $\alpha\geq1$, then $\rho_{m}^{(a)}(2^{\alpha})=\widetilde{\rho}_{m}^{(a)}(2^{\alpha}),$ where
\begin{align*}
    \widetilde{\rho}_{m}^{(a)}(2^{\alpha}):=
    \begin{cases}
    1~&\mathrm{if}~\alpha=1,\\
    \\
    1+\displaystyle{\prod_{{q|am\ {\text {\rm prime}}}}}\leg{-1}{q}~&\mathrm{if}~\alpha\geq2.
    \end{cases}
\end{align*}

\noindent
(3) If $p=3$, $p\nmid am$ and $\alpha\geq1$, then $\rho_{m}^{(a)}(3^{\alpha})=\widetilde{\rho}_{m}^{(a)}(3^{\alpha})$, where
\begin{align*}
    \widetilde{\rho}_{m}^{(a)}(3^{\alpha}):=
    \begin{cases}
     \left(1+\leg{a^{-1}m}{3}\right)~&\mathrm{if}~\alpha=1,\\
     \\
    \frac{3}{2}\cdot\left(1+\leg{a^{-1}m}{3}\right)\left(1+\cubic{a^{-1}}{9}\right)~&\mathrm{if}~\alpha\geq2.
    \end{cases}
\end{align*}

\noindent
(4) If $p\geq 5$ is prime, $p\nmid am$ and $\alpha \geq 1$, then $\rho_{m}^{(a)}(p^{\alpha})=\widetilde{\rho}_{m}^{(a)}(p^{\alpha})$, where
\begin{align*}
    \widetilde{\rho}_{m}^{(a)}(p^{\alpha}):=\frac{1}{2}\cdot\left(1+\cubic{a^{-1}}{p}~\right)\left(1+\leg{a^{-1}m}{p}\right)\left(2+\leg{-3}{p}\right).
\end{align*}

\noindent
(5) If $p\geq 5$ is prime, $p\nmid a$, $p|m$ and $\alpha\geq1$, then $\rho_{m}^{(a)}(p^{\alpha})=\widehat{\rho}_{m}^{(a)}(p^{\alpha})$, where
\begin{align*}
    \widehat{\rho}_{m}^{(a)}(p^{\alpha}):=
    \begin{cases}
    2\alpha-1 ~&\mathrm{if}~\alpha\leq3\cdot\mathrm{ord}_{p}(m),\\
    0 ~&\mathrm{if}~\alpha>3\cdot\mathrm{ord}_{p}(m)~\mathrm{and}~2\nmid\mathrm{ord}_{p}(m),\\
    \widetilde{\rho}^{(a)}_{m/p^{\mathrm{ord}_{p}(m)}}(p^{\alpha-3\cdot\mathrm{ord}_{p}(m)}) ~&\mathrm{if}~\alpha>3\cdot\mathrm{ord}_{p}(m)~\mathrm{and}~2|\mathrm{ord}_{p}(m).
    \end{cases}
\end{align*}

\noindent
(6) If $p\geq 5 $ is prime, $p|a$ and $\alpha\geq1$, then 
\begin{align*}
    \rho_{m}^{(a)}(p^{\alpha})=
    \begin{cases}
    p^{\alpha} ~&\mathrm{if}~\alpha\leq3\cdot\mathrm{ord}_{p}(m)~\mathrm{and}~\alpha\leq\mathrm{ord}_{p}(a),\\
    2\cdot(\alpha-\mathrm{ord}_{p}(a))-1 ~&\mathrm{if}~\alpha\leq3\cdot\mathrm{ord}_{p}(m)~\mathrm{and}~\alpha>\mathrm{ord}_{p}(a),\\
    \widehat{\rho}^{(a)}_{m/p^{\mathrm{ord}_{p}(a)}}(p^{\alpha-\mathrm{ord}_{p}(a)}) ~&\mathrm{if}~\alpha>3\cdot\mathrm{ord}_{p}(m)>\mathrm{ord}_{p}(a),\\
     \widetilde{\rho}^{(a)}_{m/p^{\mathrm{ord}_{p}(a)}}(p^{\alpha-\mathrm{ord}_{p}(a)}) ~&\mathrm{if}~\alpha>3\cdot\mathrm{ord}_{p}(m)=\mathrm{ord}_{p}(a).
    \end{cases}
\end{align*}
\end{lemma}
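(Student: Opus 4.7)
The plan is to argue in three stages. First, part (1) follows immediately from the Chinese Remainder Theorem: if $\gcd(M_1,M_2)=1$, the ring isomorphism $\Z/M_1M_2\cong \Z/M_1\times \Z/M_2$ puts solutions of $an^6\equiv m^3\pmod{M_1 M_2}$ into bijection with pairs of solutions modulo $M_1$ and $M_2$, so $\rho_m^{(a)}$ is multiplicative in $M$. This reduces the remaining parts to computing $\rho_m^{(a)}(p^\alpha)$ at each prime power.

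For the coprime cases (parts (2)--(4)), where $p\nmid am$, both $a$ and $m$ are units modulo $p^\alpha$, and the equation rearranges to $n^6\equiv a^{-1}m^3\pmod{p^\alpha}$; in particular any solution $n$ must itself be a unit. The count of solutions therefore equals the kernel size of the $6$th-power map on $(\Z/p^\alpha)^*$ when $a^{-1}m^3$ lies in its image, and $0$ otherwise. For $p\geq 5$, the group $(\Z/p^\alpha)^*$ is cyclic of order $p^{\alpha-1}(p-1)$ with $\gcd(6,p^{\alpha-1})=1$, so the $6$th-power condition depends only on residues mod $p$ and is equivalent to being simultaneously a square (detected by $\leg{\cdot}{p}$) and a cube (detected by $\cubic{\cdot}{p}$, which is non-trivial precisely when $\leg{-3}{p}=1$); these combine to give the closed form in (4). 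For $p=3$ the cases $\alpha=1$ and $\alpha\geq 2$ are separated because $\gcd(6,|(\Z/3^\alpha)^*|)$ jumps from $2$ to $6$; for $\alpha\geq 2$, being a $6$th power mod $3^\alpha$ amounts to being a square mod $3$ and a cube mod $9$, which is exactly the content of (3). For $p=2$, the structure theorem $(\Z/2^\alpha)^*\cong \Z/2\times \Z/2^{\alpha-2}$ (for $\alpha\geq 3$) together with direct verification in the small cases $\alpha=1,2$ reduces (2) to a character identity.

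Parts (5) and (6) are handled by $p$-adically peeling off common factors of $p$ from $a$, $m$, and the unknown $n$. Writing $m=p^{\mu}m'$, $a=p^{\lambda}a'$ (with $\lambda=0$ in part (5)), and $n=p^j n'$ with $\gcd(p,a'm'n')=1$, the congruence becomes
\[
a'\,p^{\lambda+6j}(n')^6 \;\equiv\; p^{3\mu}(m')^3 \pmod{p^\alpha}.
\]
The analysis splits according to whether both $\lambda+6j,\,3\mu\geq \alpha$ (any $n'$ works, giving the $p^\alpha$ contribution in (6)), whether $\lambda+6j=3\mu<\alpha$ (the equation descends to a coprime sub-congruence modulo $p^{\alpha-(\lambda+6j)}$, handled by parts (2)--(4) applied to $a'$ and $m'$), or whether one side strictly dominates the other (no solutions). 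For each admissible $j$, the number of lifts of $n'$ to residues modulo $p^\alpha$ is a fixed power of $p$; summing over $j$ and multiplying by these lift counts reproduces the piecewise expressions in (5) and (6). The parity condition $2\mid \ord_p(m)$ appearing in (5) reflects that $j=\mu/2$ must be an integer when $6j=3\mu$.

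The main technical obstacle is the careful bookkeeping in (5) and (6): tracking the three valuations $\ord_p(a)$, $\ord_p(m)$, $\ord_p(n)$ simultaneously, correctly identifying which regime each triple $(\alpha,\mu,\lambda)$ falls into, and checking that the lift counts combine correctly with the character sums supplied by (2)--(4). The remainder amounts to verification of the explicit character formulas through direct computation in $(\Z/p^\alpha)^*$ for small $\alpha$, extended to general $\alpha$ via Hensel's lemma or the cyclic/product structure of the unit group.
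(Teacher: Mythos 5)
Your proposal follows essentially the same route as the paper: part (1) via the Chinese Remainder Theorem, parts (2)--(4) by reducing to $n^6\equiv a^{-1}m^3\pmod{p^\alpha}$ and using Hensel's lemma together with the cyclic (or near-cyclic) structure of $(\Z/p^\alpha)^{\times}$ to read off the square/cube conditions encoded by the Legendre and cubic symbols, and parts (5)--(6) by peeling off powers of $p$ from $a$, $m$, $n$ and reducing to the coprime cases. The paper's own treatment of (5)--(6) is just as terse as yours, so the remaining valuation bookkeeping you defer is exactly what the paper also leaves to the reader.
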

\begin{proof}
Claim (1) follows immediately by the Chinese Remainder Theorem. 

For claim (2), since $2\nmid am$, we may rewrite the congruence equation as the equation
\begin{align*}
    n^6\equiv a^{-1}m^3\pmod{2^{\alpha}}.
\end{align*}
By observation, we have $\rho_{m}^{(a)}(2)=1$. Moreover, for any $\alpha\geq 2$, using Hensel's Lemma, we get $\rho_{m}^{(a)}(2^{\alpha})=\rho_{m}^{(a)}(4)$. Since $(\Z/4\Z)^{\times}$ is cyclic group of order $2$, the congruence is $ 1\equiv am\pmod{4}.$ Hence, this condition is determined by $1+\displaystyle{\prod_{q|am \ {\text {\rm prime}}}}\leg{-1}{q}$. 

Similarly, for claim (3), we consider the equation $n^6\equiv a^{-1}m^3\pmod{3^{\alpha}}.$ If $\alpha=1$, then it turns out that $1\equiv a^{-1}m\pmod{3},$ which corresponds to the factor $1+\leg{a^{-1}m}{3}.$ Furthermore, for any $\alpha\geq2$, using Hensel's Lemma, we only need to consider the equation $n^6\equiv a^{-1}m^3\pmod{9}.$ Since $(\Z/9\Z)^{\times}$ is a cyclic group of order $6$, the equations becomes $1\equiv a^{-1}m^3\pmod{9}$. Hence, we have $\rho_{m}^{(a)}(3^{\alpha})=6$ if $a^{-1}m$ is a quadratic residue modulo $3$ and $a^{-1}$ is a cubic residue modulo $9$. Otherwise, we have $\rho_{m}^{(a)}(3^{\alpha})=0$.

For claim (4), we again apply Hensel's Lemma to get $\rho_{m}^{(a)}(p^{\alpha})= \rho_{m}^{(a)}(p).$ Therefore, it suffices to establish the formula for $\rho_{m}^{(a)}(p)$. It is clear that $\rho_{m}^{(a)}(p)=0$ when $a^{-1}$ is not a cubic residue modulo $p,$ or $a^{-1}m$ is not quadratic residue modulo $p$. Moreover, since $p$ is an odd prime, if $\rho_{m}^{(a)}(p)\neq0$, then the condition $p\equiv 1 \pmod{3}$ and $p>3$ gives $\rho_{m}^{(a)}(p)=6$. Otherwise, we have $\rho_{m}^{(a)}(p)=2$. These two cases determine the last factor $2+\leg{-3}{p}$.

Next, we deal with claims (5) by factoring out prime factor $p$ of $a$ and $m$ to get the results from previous claims (2)--(4). For claim (5), if $\alpha\leq3\cdot\mathrm{ord}_{p}(m)$, then we solve the equation $n^{6}\equiv 0\pmod{p^{\alpha}}$. Hence, we have $\rho_{m}^{(a)}(p^{\alpha})=2\alpha-1$. Moreover, if $\alpha>3\cdot\mathrm{ord}_{p}(m)$, then $2\nmid\mathrm{ord}_{p}(m)$ gives $\rho_{m}^{(a)}(p^{\alpha})=0$ by comparing $p$-valuations on the both sides of the equation. Now, we consider the last case $\alpha>3\cdot\mathrm{ord}_{p}(m)$ and $2|\mathrm{ord}_{p}(m)$. After removing the prime factor from the original equation, we have 
\begin{align*}
    n^{6}\equiv a^{-1}\left(\frac{m}{p^{\mathrm{ord}_p(m)}}\right)^3\pmod{p^{\alpha-3\cdot\mathrm{ord}_p(m)}}.
\end{align*}
Hence, we can use claims (2)--(4) to solve the above equation.
Finally, we apply the same argument to obtain (6). Hence, the result follows from the previous claims (2)--(5). 
\end{proof}

To prove Theorem~\ref{summatory},  we will need to obtain ``average value''
results for $\rho_{m}^{(a)}(M)$. To this end, we must contend with the fact that
the cubic residue symbol $\cubic{\cdot}{p}$ is not multiplicative. However, using the algebraic number theory of the Eisenstein field, we can circumvent this issue by making use of the genuine cubic character
$\cubiccharacter{\cdot}{\pi}.$ 

To make this precise, we let
$\omega:=({-1+\sqrt{-3}})/{2},$ and we employ the ring of Eisenstein integers $\Z[\omega]=\left\{a+b\omega:~a,b\in\Z\right\}$. Let $\pi$ be a prime in $\Z[\omega]$ such that the norm $N(\pi)\neq 3$. Given any $\beta\in\Z[\omega]$ and $k\in\left\{0,1,2\right\}$, the \textit{cubic residue character} of $\beta\pmod \pi$ is defined by
\begin{equation}
\cubiccharacter{\beta}{\pi}:=
   \begin{cases}
   \omega^{k}~&\textrm{if}~\beta^{\frac{N(\pi)-1}{3}}\equiv\omega^{k}\pmod\pi,\\
   0~&\textrm{if}~\pi|\beta.
   \end{cases}
   \end{equation}

\begin{lemma}\label{CubicDictionary}
Assuming the notation above, the following are true.

\noindent
(1) The cubic function $\cubiccharacter{\cdot}{\pi}$ defines a multiplicative character from $\Z[\omega]$ to $\C.$

\noindent
(2) If $N(\pi)=p$ is a prime $p\equiv1\pmod 3,$ then $p=\pi\cdot\overline{\pi}$, where $\overline{\pi}$ is the conjugate of $\pi$ in $\Z[\omega]$, and we have

\begin{align*}
    \cubic{n}{p}=\frac{2}{3}\cdot\cubiccharacter{n}{\pi}+\frac{2}{3}\cdot\cubiccharacter{n}{\overline{\pi}}-\frac{1}{3}.
\end{align*}

\noindent
(3)  If $\pi=p$ is a prime with $p\equiv2\pmod 3$, then for any $n\in\Z$ with $p\nmid n$ we have 
    \begin{align*}
        \cubic{n}{p}=\cubiccharacter{n}{p}.
    \end{align*}
\end{lemma}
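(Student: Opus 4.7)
The three statements are formal consequences of the definitions together with standard facts from the arithmetic of the Eisenstein integers, so my plan is a short case analysis with no serious obstacles; the only real subtlety is confirming the conjugation identity $\cubiccharacter{n}{\overline\pi} = \overline{\cubiccharacter{n}{\pi}}$ that drives the split-prime case. Throughout, I will use that $\Z[\omega]$ is a Euclidean domain, that $\Z[\omega]/\pi$ is a finite field of order $N(\pi)$, and that $\cubiccharacter{\beta}{\pi}$ is defined as the unique cube root of unity (or $0$) congruent to $\beta^{(N(\pi)-1)/3}$ modulo $\pi$.

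For (1), multiplicativity is immediate: in the field $\Z[\omega]/\pi$ the map $\beta \mapsto \beta^{(N(\pi)-1)/3}$ is a group homomorphism from the unit group into the subgroup $\{1,\omega,\omega^2\}$ of cube roots of unity, and it sends multiples of $\pi$ to $0$. For (3), if $p \equiv 2 \pmod{3}$ then $\gcd(3,p-1) = 1$, so cubing is a bijection on $\F_p^{\times}$, every integer coprime to $p$ is a cubic residue modulo $p$, and hence $\cubic{n}{p} = 1$. On the other hand, $N(p) = p^2$, and since $3 \mid p+1$ we may factor $(p^2-1)/3 = (p-1)\cdot (p+1)/3$; Fermat's little theorem then gives $n^{(p^2-1)/3} \equiv 1 \pmod{p}$, so $\cubiccharacter{n}{p} = 1$ as well, and the two values agree.

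For (2), when $p \equiv 1 \pmod{3}$ splits as $p = \pi\overline\pi$, both residue rings $\Z[\omega]/\pi$ and $\Z[\omega]/\overline\pi$ are isomorphic to $\F_p$, and complex conjugation on $\Z[\omega]$ swaps $\pi$ with $\overline\pi$ while fixing $\Z$; applying conjugation to the defining congruence $n^{(p-1)/3} \equiv \cubiccharacter{n}{\pi} \pmod{\pi}$ yields $\cubiccharacter{n}{\overline\pi} = \overline{\cubiccharacter{n}{\pi}}$ for integers $n$. The case $p \mid n$ is trivial, so assume $p \nmid n$. If $n$ is a cubic residue mod $p$, both cubic symbols equal $1$, and the right-hand side of the claimed identity equals $\tfrac{2}{3} + \tfrac{2}{3} - \tfrac{1}{3} = 1 = \cubic{n}{p}$. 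Otherwise $\cubiccharacter{n}{\pi} \in \{\omega,\omega^2\}$, so $\cubiccharacter{n}{\pi} + \cubiccharacter{n}{\overline\pi} = \omega + \omega^2 = -1$, and the right-hand side becomes $\tfrac{2}{3}(-1) - \tfrac{1}{3} = -1 = \cubic{n}{p}$, completing the verification.
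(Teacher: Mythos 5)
Your proof is correct and follows essentially the same route as the paper: multiplicativity read off from the definition, the conjugation identity $\cubiccharacter{n}{\overline{\pi}}=\overline{\cubiccharacter{n}{\pi}}$ combined with a residue/non-residue case split (using $\omega+\omega^2=-1$) for the split primes, and the observation that every integer prime to $p$ is a cube for inert primes. Your verification in (3) that $\cubiccharacter{n}{p}=1$ via $n^{(p^2-1)/3}=(n^{p-1})^{(p+1)/3}\equiv 1 \pmod p$ is in fact slightly more explicit than the paper's, but the argument is the same in substance.
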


\begin{proof}
Given any $\beta_1$ and $\beta_2$ in $\Z[\omega]$. By the definition of  $\cubiccharacter{\cdot}{\pi}$, we obtain
\begin{align*}
     \cubiccharacter{\beta_1\beta_2}{\pi}=\cubiccharacter{\beta_1}{\pi}\cubiccharacter{\beta_2}{\pi}.
\end{align*}
Hence, the claim $(1)$ holds.

Next, we recall the well-known fact that 
\begin{align}\label{ecube}
    n~\mathrm{is~a~cubic~residue~modulo}~\pi \quad \Longleftrightarrow \quad \cubiccharacter{n}{\pi}=1.
\end{align}
Suppose that $n$ is a cubic residue modulo $p$. Then  $n$ is a cubic residue modulo $\pi$ (resp. $\overline{\pi}$). Hence, by the definition of cubic residue symbol and (\ref{ecube}), we have $$\cubic{n}{p}=1=\frac{2}{3}\cdot\cubiccharacter{n}{\pi}+\frac{2}{3}\cdot\cubiccharacter{n}{\overline{\pi}}-\frac{1}{3}.$$
Similarly, if $n$ is a cubic non-residue modulo $p$, then we have $n$ is a cubic non-residue modulo $\pi$ (resp. $\overline{\pi}$). It follows that
\begin{align*}
    \cubic{n}{p}=-1,\quad\mathrm{and}\quad\overline{\cubiccharacter{n}{\pi}}={\cubiccharacter{n}{\overline{\pi}}}\neq1.
\end{align*}
Hence, by the fact that ${\cubiccharacter{n}{\pi}}+\overline{\cubiccharacter{n}{\pi}}=-1$, we get the desired formula.

Last, we deal with the claim (3). Since $p\equiv2\pmod 3$, we know that $p$ is still a prime in $\Z[\omega]$. Hence, the proof of (3) is directly from the fact that every integer is a cubic residue modulo $p$ for $p\equiv2\pmod 3$.
\end{proof}

\subsection{Some average value theorems for $\rho_{m}^{(a)}(M)$}

To prove Theorem~\ref{summatory}, we require asymptotic formulas controlling the
average behavior of the counting functions $\rho_{m}^{(a)}(M)$. To this end, we need to
establish that there is ample cancellation for certain sums arising from Legendre symbols
and the cubic residue symbols. 

Such cancellation can be deduced
using the P\'olya-Vinogradov \cite{Polya, Vinogradov} inequality for any non-principal Dirichlet character $\chi(\cdot)$ modulo $q$
\begin{equation}\label{PV}
\sum_{M\leq n\leq M+X}\chi(n)\ll \sqrt{q}\log q,
\end{equation}
where the implied constant in $\ll$ is absolute.
The following lemma, which 
involves $\tau(n)$, the number of divisors of $n,$ and Euler's totient function $\varphi(n),$ will play a central role in the proof of the two parts of Theorem~\ref{summatory}.

\begin{lemma}\label{SoundLemma1}
If $t$ is an integer for which $\gcd(t,6)=1,$ and let $d_1$, $d_2$, and $d_3$ denote square-free divisors of $t$ for which $d_2\neq1,$ then we have
\begin{align*}
    \sum_{\substack{M\leq m\leq2M\\m\equiv h\!\!\! \!\pmod{4N^{{(a)}}}\\ \gcd(6am,t)=1}}\cubiccharacter{a^{-1}}{d_1}\leg{a^{-1}m}{d_2}\leg{-3}{d_3}\ll_a \tau(t)\sqrt{d_2}\log d_2.
\end{align*}

\end{lemma}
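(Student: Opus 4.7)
Only the middle factor $\leg{a^{-1}m}{d_2}$ really depends on $m$: both $\cubiccharacter{a^{-1}}{d_1}$ and $\leg{-3}{d_3}$ depend only on $a,d_1,d_3$ and are bounded in modulus by $1$. Since the coprimality condition $\gcd(6am,t)=1$ forces $\gcd(a,d_2)=1$, one can write $\leg{a^{-1}m}{d_2}=\leg{a^{-1}}{d_2}\leg{m}{d_2}$ and pull $\leg{a^{-1}}{d_2}$ outside as another constant. Thus the lemma reduces to showing
\[
S:=\sum_{\substack{M\leq m\leq 2M\\ m\equiv h\!\pmod{4N^{(a)}}\\ \gcd(m,t)=1}}\leg{m}{d_2}\ \ll_a\ \tau(t)\sqrt{d_2}\log d_2,
\]
an incomplete non-principal character sum over an arithmetic progression, to which the natural tool is the P\'olya--Vinogradov inequality~(\ref{PV}).

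I would next peel off the two side conditions on $S$ in turn. First, M\"obius inversion $\mathbbm{1}_{\gcd(m,t)=1}=\sum_{e\mid \gcd(m,t)}\mu(e)$ together with the substitution $m=ef$ gives
\[
S=\sum_{e\mid t}\mu(e)\leg{e}{d_2}\sum_{\substack{M/e\leq f\leq 2M/e\\ ef\equiv h\!\pmod{4N^{(a)}}}}\leg{f}{d_2},
\]
where divisors with $\gcd(e,d_2)>1$ contribute zero, and $\gcd(h,4N^{(a)})=1$ forces $\gcd(e,4N^{(a)})=1$ for the congruence to be solvable (it becomes $f\equiv he^{-1}\!\pmod{4N^{(a)}}$). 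Second, orthogonality of Dirichlet characters modulo $4N^{(a)}$ converts the progression condition into an average
\[
\frac{1}{\varphi(4N^{(a)})}\sum_{\psi\bmod 4N^{(a)}}\overline{\psi(he^{-1})}\sum_{M/e\leq f\leq 2M/e}\psi(f)\leg{f}{d_2}.
\]

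Each product $\psi\cdot\leg{\cdot}{d_2}$ is a Dirichlet character to a modulus dividing $4N^{(a)}d_2$. Writing $d_2=d_2'd_2''$ with $d_2':=\gcd(d_2,4N^{(a)})$, the factor $\leg{\cdot}{d_2'}$ is determined by $m\bmod 4N^{(a)}$ and contributes only a bounded (in $a$) constant, while the remaining Legendre symbol $\leg{\cdot}{d_2''}$ is non-principal whenever $d_2''\neq 1$. For these generic $d_2$, the combined character is non-principal of modulus $\ll_a d_2$, so (\ref{PV}) bounds each inner sum by $\ll_a\sqrt{d_2}\log d_2$. Since $\varphi(4N^{(a)})\ll_a 1$ and there are at most $\tau(t)$ divisors $e\mid t$, the total is $\ll_a\tau(t)\sqrt{d_2}\log d_2$; the exceptional $d_2$ dividing $4N^{(a)}$ form a finite set controlled by $a$ and are absorbed into $\ll_a$.

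The main obstacle is the bookkeeping when $d_2$ shares primes with the progression modulus $4N^{(a)}$: one must split $d_2$ accordingly, verify that the surviving character is non-principal to apply (\ref{PV}), and check that all losses coming from the orthogonality average, from the M\"obius sum over $e\mid t$, and from the split of $d_2$ into its $4N^{(a)}$-part and its coprime complement are absorbed into the constant $\ll_a$. Once this decomposition is in place, everything reduces to a uniform application of (\ref{PV}) on each of the $\tau(t)$ M\"obius terms, with the dependence on $a$ entering only through $N^{(a)}$.
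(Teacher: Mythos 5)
Your proposal follows essentially the same route as the paper: detect the progression $m\equiv h\pmod{4N^{(a)}}$ by orthogonality of Dirichlet characters, detect the coprimality condition by M\"obius inversion over the at most $\tau(t)$ square-free divisors of $t$, pull out the $m$-independent factors $\cubiccharacter{a^{-1}}{d_1}$ and $\leg{-3}{d_3}$, and apply P\'olya--Vinogradov to the non-principal character $\psi(\cdot)\leg{\cdot}{d_2}$ of conductor dividing $4N^{(a)}d_2$. One remark on your final paragraph: the ``exceptional $d_2$'' sharing primes with $4N^{(a)}$ never occur, since $d_2\mid t$, the sum is empty unless $\gcd(6a,t)=1$, and every prime dividing $N^{(a)}$ divides $6a$, so $\gcd(d_2,4N^{(a)})=1$ automatically; this is just as well, because if the combined character were principal the inner sum would have size of order $M/e$, which could not be ``absorbed into $\ll_a$'' as you suggest.
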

\begin{proof}
Using the orthogonality property of the two Dirichlet characters modulo $4N^{{(a)}}$ to isolate the congruence class $m\equiv h\!\!\pmod{4N^{{(a)}}}$,  we immediately obtain
\begin{align*}
K&:= \sum_{\substack{M\leq m\leq2M\\m\equiv h\!\!\! \!\pmod{4N^{{(a)}}}\\ \gcd(6am,t)=1}}\cubiccharacter{a^{-1}}{d_1}\leg{a^{-1}m}{d_2}\leg{-3}{d_3}\\
  &=\frac{1}{\varphi(4N^{{(a)}})} \cubiccharacter{a^{-1}}{d_1}\leg{-3}{d_3}\sum_{\chi~(\mathrm{mod}~4N^{{(a)}})}\sum_{\substack{M\leq m\leq2M\\ \gcd(6am,t)=1}}\chi(h^{-1}m)\leg{a^{-1}m}{d_2}.
\end{align*}
We now use the elementary fact that $\sum_{d\mid n}\mu(d)=1$ (resp. $0$) if
$n=1$ (resp. $n>1$). Namely, by considering factorizatons of $m$, say $m=fg$, we obtain
\begin{align*}
  K &=\frac{1}{\varphi(4N^{{(a)}})}\cubiccharacter{a^{-1}}{d_1}\leg{-3}{d_3}\sum_{\chi~(\mathrm{mod}~4N^{{(a)}})}\sum_{M\leq m\leq2M}\sum_{f\mid \gcd(6am,t)}\mu(f)\chi(h^{-1}m)\leg{a^{-1}m}{d_2}\\
   &\leq\frac{1}{\varphi(4N^{{(a)}})}\sum_{\chi~(\mathrm{mod}~4N^{{(a)}})}\sum_{f\mid t}\left|\sum_{M/f\leq m\leq2M/f}\chi(g)\leg{g}{d_2}\right|.
\end{align*}
Note that $\chi(\cdot)\leg{\cdot}{d_2}$ is a non-principal character with conductor that is a divisor of $4N^{{(a)}}d_2$. Therefore, the P\'olya-Vinogradov inequality (\ref{PV}) gives
\begin{align*}
    K\ll_{a} \sum_{f\mid t}\sqrt{d_2}\log d_2=\tau(t)\sqrt{d_2}\log d_2.
\end{align*}
This completes the proof. 
\end{proof}

The lemma above establishes cancellation when summing over  $m$, which
appears in an upper parameter of a quadratic residue symbol.
The proof of Theorem~\ref{summatory} also requires the following lemma which guarantees ample cancellation of a similar sum
involving the lower parameters of these symbols. To state this lemma, we let
$\omega(n)$ denote the number of distinct prime factors of $n$, and we let $\widehat\omega(\alpha),$ where $\alpha\in \Z[\omega],$ denote the number of distinct prime factors $\pi\equiv 1~(\mathrm{mod~3}).$ Moreover, we extend the M\"obius function to $\Z[\omega]$ in the natural way.

\begin{lemma}\label{UglyLemma} Suppose that $a$ and $m$ are non-zero integers for which $am$ is not an integral square, and $a$ is not an integral cube.  If $R_1,R_2,R_3\geq 2$ are integers and $T>0$, then we have
\begin{displaymath}
\begin{split}
\Psi_{m}^{(a)}(R_1,R_2, R_3;T)&:= \sum_{\substack{r_1 r_1'\leq 2T/R_1 \\ \gcd(r_1 r_1',6am)=1}}  \
 \sum_{\substack{r_2 r_2'\leq 2T/R_2\\ \gcd(r_2 r_2',6am)=1}}  \
 \sum_{\substack{r_3 r_3'\leq 2T/R_3\\ \gcd(r_3 r_3',6am)=1}}  
   S_{1,r_1}\cdot S_{2,r_2}\cdot S_{3,r_3}\\
   &\ \ \  \ll_a 
     m^{\frac{3}{2}}\prod_{i=1}^3\left(\frac{T}{\sqrt{R_i}}\sqrt{\log m}\right),
\end{split}
\end{displaymath}
where the $r_i$ and $r_i'$ are positive integers, and
\begin{align*}
    S_{1,r_1}:=\mu^2(r_1)\left({2}/{3}\right)^{\widehat\omega(r_1)}\cubiccharacter{a^{-1}}{r_1},\quad
    S_{2,r_2}:=\mu^2(r_2)\leg{a^{-1}m}{r_2},\quad
    S_{3,r_3}:=\mu^2(r_3)\left({1}/{2}\right)^{\omega(r_3)}\leg{-3}{r_3}.
\end{align*}
\end{lemma}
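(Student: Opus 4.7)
The strategy is to exploit the product structure of the summand and reduce the problem to P\'olya-Vinogradov-type bounds on three independent character sums. Since $S_{i,r_i}$ depends only on $r_i$ and the constraints $r_ir_i'\leq 2T/R_i$, $\gcd(r_ir_i',6am)=1$ each involve only the pair $(r_i,r_i')$, the triple sum factors cleanly:
\[
\Psi_m^{(a)}(R_1,R_2,R_3;T) \;=\; \Sigma_1\cdot\Sigma_2\cdot\Sigma_3, \qquad \Sigma_i \;:=\; \sum_{\substack{r_i r_i'\leq 2T/R_i\\ \gcd(r_ir_i',6am)=1}}S_{i,r_i}.
\]
The lemma therefore reduces to proving the single-factor estimate $\Sigma_i\ll_a m^{1/2}(T/\sqrt{R_i})\sqrt{\log m}$ for each $i\in\{1,2,3\}$.

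For each $i$, set $X_i:=2T/R_i$. Grouping the summation over $r_i'$ first and using the sieve identity $\#\{q\leq Y : \gcd(q,6am)=1\}=(\varphi(6am)/(6am))\,Y+O(\tau(6am))$ reduces each $\Sigma_i$ to weighted partial sums of $S_{i,\cdot}$. A Dirichlet-hyperbola split, or equivalently partial summation applied twice, then bounds $\Sigma_i$ in terms of a uniform bound $|F_i(Y)|\leq B_i$ for the partial character sum
\[
F_i(Y)\;:=\;\sum_{\substack{r\leq Y\\ \gcd(r,6am)=1}}S_{i,r},
\]
yielding $\Sigma_i\ll_a (T/\sqrt{R_i})\cdot B_i$ up to logarithmic losses absorbed into the final $\sqrt{\log m}$. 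Thus it suffices to establish $B_i\ll_a m^{1/2}\sqrt{\log m}$ for each $i$.

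The three partial sums each admit a P\'olya-Vinogradov-type bound, but via somewhat different routes. For $i=2$, the map $r\mapsto\leg{a^{-1}m}{r}$ is (after the $\mu^2$ factor is dealt with by standard squarefree Möbius inversion $\mu^2(r)=\sum_{d^2\mid r}\mu(d)$) a real Dirichlet character of conductor dividing $4|am|$, so (\ref{PV}) directly yields $B_2\ll_a m^{1/2}\sqrt{\log m}$. For $i=3$, the character $\leg{-3}{\cdot}$ has tiny conductor, but the multiplicative weight $2^{-\omega(r_3)}$ obstructs a direct application of P\'olya-Vinogradov; we dismantle it by a combinatorial expansion of $\prod_{p\mid r}(1/2)$ over divisors, reducing $F_3$ to a bounded superposition of pure Dirichlet character sums, each of which is handled by (\ref{PV}). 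For $i=1$, the cubic residue symbol $\cubiccharacter{a^{-1}}{\cdot}$ is \emph{not} a Dirichlet character on $\Z$; we invoke Lemma~\ref{CubicDictionary} to rewrite it as a linear combination of values of the genuine cubic Hecke character $\cubiccharacter{\cdot}{\pi}$ on $\Z[\omega]$. The weight $(2/3)^{\widehat\omega(r_1)}$ in $S_{1,r_1}$ is precisely the coefficient that survives when one retains, at each split prime $p=\pi\bar\pi\equiv 1\!\pmod 3$, the genuine-cubic-character contributions from part~(2) of that lemma. An analogue of P\'olya-Vinogradov over $\Z[\omega]$ applied to the resulting Hecke character then furnishes $B_1\ll_a m^{1/2}\sqrt{\log m}$.

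The principal obstacle is handling the multiplicative weights $(2/3)^{\widehat\omega(r_1)}$ and $(1/2)^{\omega(r_3)}$ cleanly: these are not themselves character values, so P\'olya-Vinogradov cannot be applied to $S_{i,r}$ directly, and the Möbius-type expansions must reveal them as short Dirichlet convolutions of genuine character sums without blowing up the number of pieces. A secondary subtlety is tracking how the conductors, which depend on both $a$ and $m$, propagate through these decompositions so as to produce exactly one factor of $m^{1/2}\sqrt{\log m}$ per sum, with the $a$-dependence absorbed into the implicit constant $\ll_a$.
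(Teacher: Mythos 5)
Your overall architecture --- factor $\Psi_m^{(a)}$ into three independent sums and bound each by P\'olya--Vinogradov --- is the same as the paper's, but two of your key steps do not go through as stated. First, the single-factor bounds $B_i\ll_a m^{1/2}\sqrt{\log m}$ are not what P\'olya--Vinogradov delivers, and your route to them is missing the essential preliminary move. The symbol $\cubiccharacter{a^{-1}}{r_1}$ is being summed over its \emph{lower} argument, with an upper argument that is an inverse modulo $r_1$; before any PV-type inequality can be applied one must convert it into a character of $r_1$ with an explicit conductor. The paper does this by exploiting $\gcd(r_1,6am)=1$ to write $\cubiccharacter{a^{-1}}{r_1}=\cubiccharacter{a^{-1}(6am)^3}{r_1}=\cubiccharacter{216a^2m^3}{r_1}$ (and similarly replaces $\leg{a^{-1}m}{r_2}$ by $\leg{36am}{r_2}$ and $\leg{-3}{r_3}$ by $\leg{-12a^2m^2}{r_3}$), so that the conductors divide $216^2|a^4m^6|$, $36|am|$ and $12a^2m^2$ respectively. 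Your appeal to Lemma~\ref{CubicDictionary} does not accomplish this: that lemma converts the $\pm1$-valued symbol $\cubic{\cdot}{p}$ into the character $\cubiccharacter{\cdot}{\pi}$, a conversion that has already happened upstream (it is exactly how $S_{1,r_1}$ and the weight $(2/3)^{\widehat\omega(r_1)}$ arose). With the conductors as above, PV bounds the inner sums of the three factors by $\ll_a m^3\log m$, $\sqrt{m}\log m$ and $m\log m$ --- very far from a uniform $m^{1/2}\sqrt{\log m}$ --- and the paper only recovers the total exponent $3/2$ by balancing each PV bound against the trivial bound $\ll R_i/l_i^2$ via a $\min$, a step your argument omits entirely. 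As written, your claimed bound for $B_1$ is false in general, and without the $\min$-balancing (or a sharper conductor computation) the product of the three factors exceeds $m^{3/2}$.

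Second, your treatment of the multiplicative weights is not viable as described. You propose to ``dismantle'' $(1/2)^{\omega(r_3)}$ (and implicitly $(2/3)^{\widehat\omega(r_1)}$) by a combinatorial expansion over divisors reducing $F_3$ to a \emph{bounded} superposition of pure Dirichlet character sums. No such bounded superposition exists: $(1/2)^{\omega(r)}$ is a genuinely multiplicative weight whose Dirichlet series is $\prod_p\left(1+\tfrac12\chi(p)p^{-s}\right)$, essentially a square root of $L(s,\chi)$ rather than a finite combination of $L$-functions; a divisor expansion such as $(1/2)^{\omega(r)}=\sum_{d\mid r}(-1/2)^{\omega(d)}$ (valid for squarefree $r$) produces an unbounded number of pieces, and summing their individual PV bounds trivially destroys the estimate. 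The paper instead keeps the weights in place and handles only the $\mu^2$ factor by the expansion $r_i=s_il_i^2$ before applying PV and partial summation; whatever one thinks of the level of detail there, your proposed reduction is a step that would fail, not merely an omitted computation.
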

\begin{proof}
Using the coprimality condition $\gcd(6am,r_1r_2 r_3)=1$, we may reformulate the
given triple sum in terms of  characters with explicit conductors that we shall use when applying the P\'olya-Vinogradov inequality (\ref{PV}). Namely, we have
\begin{equation}\label{star}
 \Psi_{m}^{(a)}(R_1,R_2, R_3;T)=G_1\cdot G_2\cdot G_3,
 \end{equation}
where, for each $i,$ we have
\vspace{-.12in}
\[G_i:= \sum_{\substack{r_i r_i'\leq 2T/R_i \\ \gcd(r_i',6am)=1}} \widetilde{S}_{i,r_i},
\vspace{-.12in}
\]
with
\vspace{-.12in}
\begin{align*}
  \widetilde{S}_{1,r_1}:=\mu^2(r_1)&\left({2}/{3}\right)^{\widehat\omega(r_1)}\cubiccharacter{216a^{2}m^3}{r_1}, \ \ \
    \widetilde{S}_{2,r_2}:=\mu^2(r_2)\leg{36am}{r_2}, \ \  \\
    &\widetilde{S}_{3,r_3}:=\mu^2(r_3)\left({1}/{2}\right)^{\omega(r_3)}\leg{-12a^2m^2}{r_3}.
\end{align*}
\vspace{-.005in}
The non-zero summands correspond to cases where $r_1\in \Z[\omega]$ is square-free, and $r_2$ and $r_3$ are square-free in $\Z$.
Therefore, the fact that $\sum_{d\mid n}\mu(d)=1$ for $n=1,$ and $0$ otherwise,
allows us to rewrite this sum as\footnote{The condition $r_i=s_i l_i^2$ includes all factorizations (modulo choices of roots of unity) of $r_i$ over $\Z[\omega]$ when $i=1$, and $r_2$ and $r_3$ over $\Z^{+}.$}
\begin{align*}    
 G_1&:=\sum_{\substack{r_1^{\prime}\leq 2T/R_1\\\gcd(r_1^{\prime},6am)=1}}\sum_{\substack{ r_1\leq2T/r_1^{\prime}}}~\sum_{\substack{l_1^2\mid r_1\\r_1=s_1l_1^2}}\mu(l_1)\left({2}/{3}\right)^{\widehat\omega(r_1)}\cubiccharacter{216a^{2}m^3}{r_1},\\
G_2&:=\sum_{\substack{r_2^{\prime}\leq 2T/R_2\\\gcd(r_2^{\prime},6am)=1}}\sum_{\substack{ r_2\leq2T/r_2^{\prime}}}~\sum_{\substack{l_2^2\mid r_2\\r_2=s_2l_2^2}}\mu(l_2)\leg{36am}{r_2},\\
G_3&:=\sum_{\substack{r_3^{\prime}\leq 2T/R_3\\\gcd(r_3^{\prime},6am)=1}}\sum_{\substack{ r_3\leq2T/r_3^{\prime}}}~\sum_{\substack{l_3^2\mid r_3\\r_3=s_3l_3^2}}\mu(l_3)\left({1}/{2}\right)^{\omega(r_3)}\leg{-12a^{2}m^2}{r_3}.
 \end{align*}
Note that we can view the cubic residue  character $\cubiccharacter{216a^{2}m^3}{\cdot}$ as a non-principal Dirichlet character of order $3$ with conductor dividing $N(216a^{2}m^3)=216^2\cdot|a^4m^6|$. Furthermore, $\leg{36am}{\cdot}$, and $\leg{-12a^2m^2}{\cdot}$ are  non-principal characters with conductors dividing $36\cdot|am|$ and $12\cdot a^2m^2$, respectively. Using the P\'olya-Vinogradov inequality (\ref{PV}) and partial summation, we have that the inner sums of $G_1, G_2,$ and $G_3$ are bounded by $\ll_{a}m^{3}(\log m),\sqrt{m}(\log m),$ and $m(\log m),$ respectively.
Depending on the comparative sizes of $R_i$ and $m$ (i.e. $m$ large), we can also bound
 the inner sum trivially by $\ll \frac{R_i}{l_i^2}$. Hence, we have 
\begin{align*}
    G_1\ll_a\sum_{\substack{r_1^{\prime}\leq 2T/R_1\\\gcd(r_1^{\prime},6am)=1}}\sum_{\substack{ l_1\leq\sqrt{2T/r_1^{\prime}}}}~\min{\left(\frac{R_1}{l_1^2}, m^3(\log m)\right)}\ll_am^{\frac{3}{4}}\frac{m}{\sqrt{R_1}}\sqrt{\log T}.
\end{align*}
Using the same calculation, we have
\begin{align*}
     G_2\ll_am^{\frac{1}{4}}\frac{T}{\sqrt{R_2}}\sqrt{\log m} \ \ \ {\text {\rm and}}\ \ \
     G_3\ll_am^{\frac{1}{2}}\frac{T}{\sqrt{R_3}}\sqrt{\log m}.
\end{align*}
The proof now follows from (\ref{star}).
\end{proof}

Using this lemma, we obtain the following average value result for $\rho_{m}^{(a)}(\cdot)$.

\begin{lemma}\label{SoundLemma2} 
Assume the hypotheses in Theorem~
\ref{summatory}. If 
$T=o(X^{\frac{1}{16}})$ and $\gcd(h,4N^{(a)})=1,$
then
as $X\rightarrow +\infty$, we have
\begin{align*}
    \sum_{\substack{M\leq m\leq2M\\m\equiv h~(\mathrm{mod}~4N^{{(a)}})}}\sum_{\substack{T\leq t\leq 2T\\\gcd(t,6am)=1}}\rho_{m}^{(a)}(t^2)\asymp_{a}  X^{\frac{1}{3}}T^{A+1} +O_a(X^{\frac{7}{24}}T^{\frac{7A}{8}+1}(\log X)^{\frac{39}{8}}).
\end{align*}

\begin{proof}
We recall that $\rho_{m}^{(a)}(M)$ is multiplicative in $M$. Moreover, Lemma~\ref{rholemma} (4) offers
a particularly simple expression for $\rho_{m}^{(a)}(p^{\alpha})$ for primes $p\geq 5$.
To prove (1), we begin by
 restricting to $\gcd(6,t)=1.$ Lemma~\ref{CubicDictionary} gives
\begin{align*}
\Upsilon^{(a)}_{h}(M,T):=    \sum_{\substack{M\leq m\leq2M\\m\equiv h~(\mathrm{mod}~4N^{{(a)}})}}\sum_{\substack{T\leq t\leq 2T\\\gcd(t,6am)=1}}\rho_{m}^{(a)}(t^2)= \sum_{\substack{M\leq m\leq2M\\m\equiv h~(\mathrm{mod}~4N^{{(a)}})}}\sum_{\substack{T\leq t\leq 2T\\\gcd(t,6am)=1}}\sum_{\substack{\theta\mid t\\r_i\mid t,i=2,3}}S_{1,\theta}\cdot S_{2,r_2}\cdot S_{3,r_3},
\end{align*}
where we recall that
\begin{align*}
    S_{1,\theta}=\mu^2(\theta)\left(\frac{2}{3}\right)^{\widehat\omega(\theta)}\cubiccharacter{a^{-1}}{\theta},\quad
    S_{2,r_2}=\mu^2(r_2)\leg{a^{-1}m}{r_2},\quad
    S_{3,r_3}=\mu^2(r_3)\left(\frac{1}{2}\right)^{\omega(r_3)}\leg{-3}{r_3}.
\end{align*}
For convenience, we let
$$
\Upsilon^{(a)}_{h}(M,T)= Y^{(a)}_{h,0}(M,T) + Y^{(a)}_{h,1}(M,T),
$$
where $Y_{h,0}^{(a)}(M,T)$ consists of the summands where
 $\theta=r_2=r_3=1,$ and $Y_{h,1}^{(a)}(M,T)$ denotes the remaining terms.
 We find that $Y_{h,0}^{(a)}(M,T)$ satisfies
\begin{align*}
  Y_{h,0}^{(a)}(M,T)&:=  \sum_{\substack{M\leq m\leq2M\\m\equiv h~(\mathrm{mod}~4N^{{(a)}})}}\sum_{\substack{T\leq t\leq 2T\\\gcd(t,6am)=1}}1=\sum_{\substack{M\leq m\leq2M\\m\equiv h~(\mathrm{mod}~4N^{{(a)}})}}\left(T\frac{\varphi(6am)}{|6am|}+O(\tau(6am))\right)\\ \ \ \\
  &\asymp_{a}MT+O_a((M+T)\log M).
\end{align*}
Next, we estimate $Y_{h,1}^{(a)}(M,T)$, which consists of those summands with $\theta \cdot r_2\cdot r_3\neq1~(\mathrm{mod}~\Z[\omega]^{\times}).$ 
Let $r_1:=\theta$. Then we separate the estimate of $Y_{h,1}^{(a)}(M,T)$ into two pieces by truncating the divisor of $t$: (1) $1\leq r_i\leq R_i$ and (2) $r>R_i$. Hence, we can rewrite $Y_{h,1}^{(a)}(M,T)$ 
\begin{align*}
    U_1+U_2:=\sum_{\substack{M\leq m\leq2M\\m\equiv h~(\mathrm{mod}~4N^{{(a)}})}}\ \sum_{\substack{T\leq t\leq 2T\\\gcd(t,6am)=1}}\left(\sum_{\substack{r_i\mid t,i=1,2,3\\1\leq r_i\leq R_i}}S_{1,r_1}\cdot S_{2,r_2}\cdot S_{3,r_3}+\sum_{\substack{r_i\mid t,i=1,2,3\\r_i\leq t/R_i}}S_{1,t/r_1}\cdot S_{2,t/r_2}\cdot S_{3,t/r_3}\right).
\end{align*}
Moreover, we consider $t=r_ir_i^{\prime}$ and rewrite $U_2$
\begin{align*}
    \sum_{\substack{M\leq m\leq2M\\m\equiv h~(\mathrm{mod}~4N^{{(a)}})}}\ \sum_{\substack{r_i\leq 2T/R_i\\\gcd(r_i,6am)=1}}\ \sum_{\substack{\max{(T/r_i,R_i)}\leq r_i^{\prime}\leq2T/r_i\\\gcd(r_i^{\prime},6am)=1}}S_{1,r_1^{\prime}}\cdot S_{2,r_2^{\prime}}\cdot S_{3,r_3^{\prime}}.
\end{align*}
Lemma~\ref{SoundLemma1} (1) asserts that
\begin{align}\label{U1}
U_1\ll \sqrt{R_2}\log R_2\sum_{T\leq t\leq 2T}\tau(t)^4\ll T\sqrt{R_2}(\log X)^6.
\end{align}

We now estimate $U_2$ by splitting the sum into two parts.
\begin{align*}
U_2&=\sum_{\substack{M\leq m\leq2M\\m\equiv h~(\mathrm{mod}~4N^{{(a)}})}}\sum_{\substack{r_i\leq 2T/R_i\\\gcd(r_i,6am)=1}}\sum_{\substack{\max{(T/r_i,R_i)}\leq r_i^{\prime}\leq2T/r_i\\\gcd(r_i^{\prime},6am)=1}}S_{1,r_1^{\prime}}\cdot S_{2,r_2^{\prime}}\cdot S_{3,r_3^{\prime}}\\
\\ &\ll_a \sum_{\substack{M\leq m\leq2M\\a:\mathrm{non cube,~}am\neq\square}}\sum_{\substack{r_i\leq 2T/R_i\\\gcd(r_i,6am)=1}} \sum_{\substack{\max{(T/r_i,R_i)}\leq r_i^{\prime}\leq2T/r_i\\\gcd(r_i^{\prime},6am)=1}}S_{1,r_1^{\prime}}\cdot S_{2,r_2^{\prime}}\cdot S_{3,r_3^{\prime}} 
+ \sum_{\substack{M\leq m\leq2M\\a:\mathrm{cube,~or~}am=\square}}\sum_{\substack{r_i\leq 2T/R_i\\\gcd(r_i,6am)=1}} \frac{T}{r_i}.
\end{align*}
Moreover, since $m\leq2M$ and $\log M\ll\log X$,  Lemma~\ref{UglyLemma} gives
\begin{align}\label{U2}
U_2\ll_a M^{
\frac{5}{2}}\prod_{i=1}^3\left(\frac{T}{\sqrt{R_i}}\sqrt{\log X}\right)+\sqrt{M}T^3(\log X)^3.
\end{align}
To balance the exponents of $M$ and $\log X$ in (\ref{U1}) and (\ref{U2}),
we take $R_1=R_2=R_3=M^{\frac{5}{4}}/(\log X)^{\frac{9}{4}},$ which in turn gives
$$
Y_{h,1}^{(a)}(M,T) \ll_a M^{\frac{5}{8}}T^3(\log X)^{\frac{39}{8}}.
$$
By the hypothesis on $T$, we have $T^2\ll M^{\frac{3}{8}},$ and so
$$
Y_{h,1}^{(a)}(M,T) \ll_a M^{\frac{7}{8}}T(\log X)^{\frac{39}{8}}.
$$
In view of the asymptotic for $Y_{h,0}^{(a)}(M,T),$ we see that $Y_{h,1}^{(a)}(M,T)$ is the error term for $\Upsilon^{(a)}(M,T),$ completing the proof. 
\end{proof}
\end{lemma}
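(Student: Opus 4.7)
The plan is to reduce the inner sum to character sums and extract cancellation via Lemmas~\ref{SoundLemma1} and~\ref{UglyLemma}. Because $\gcd(t,6am)=1$, only part (4) of Lemma~\ref{rholemma} contributes, so by multiplicativity
$$\rho_m^{(a)}(t^2) \ = \ \prod_{p\mid t}\tfrac{1}{2}\left(1+\cubic{a^{-1}}{p}\right)\left(1+\leg{a^{-1}m}{p}\right)\left(2+\leg{-3}{p}\right).$$
Expanding the product over square-free divisors of $t$ and using Lemma~\ref{CubicDictionary} to rewrite $\cubic{\cdot}{p}$ in terms of the genuine cubic characters $\cubiccharacter{\cdot}{\pi}$ on $\Z[\omega]$, I would write the inner expression as a triple sum over divisors $\theta,r_2,r_3\mid t$ with summands $S_{1,\theta}\cdot S_{2,r_2}\cdot S_{3,r_3}$ exactly as defined in Lemma~\ref{UglyLemma}.

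Next, I would split the full double sum into the diagonal term $Y^{(a)}_{h,0}(M,T)$, corresponding to $\theta=r_2=r_3=1$, and an off-diagonal piece $Y^{(a)}_{h,1}(M,T)$. The diagonal collapses to $\sum_m\sum_t 1$; applying M\"obius to handle the coprimality condition $\gcd(t,6am)=1$ and summing $m$ over its arithmetic progression gives $\asymp_a MT \asymp X^{1/3}T^{A+1}$, which is the claimed main term.

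For the off-diagonal piece, I would introduce truncation parameters $R_1,R_2,R_3$ and split each divisor sum into $r_i\leq R_i$ and $r_i>R_i$. In the small-divisor regime, moving the $m$-sum inside and invoking Lemma~\ref{SoundLemma1} provides cancellation of order $\sqrt{R_2}\log R_2$; summation over $t$ weighted by $\tau(t)^4$ yields a contribution $U_1\ll T\sqrt{R_2}(\log X)^6$. In the large-divisor regime, writing $t=r_i r_i'$ forces the complementary index $r_i'\leq 2T/R_i$ to be small, and (after peeling off the negligible set of $m$ for which $a$ is a cube or $am$ is a square, where Lemma~\ref{UglyLemma} does not apply and we use the trivial bound) Lemma~\ref{UglyLemma} gives $U_2\ll_a M^{5/2}T^3(R_1R_2R_3)^{-1/2}(\log X)^{3/2}$. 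Choosing $R_1=R_2=R_3=M^{5/4}/(\log X)^{9/4}$ equalizes these estimates and produces $Y^{(a)}_{h,1}(M,T)\ll_a M^{5/8}T^3(\log X)^{39/8}$; trading $T^2\ll M^{3/8}$ via the hypothesis $T=o(X^{1/16})$ then yields the stated error $O_a(X^{7/24}T^{7A/8+1}(\log X)^{39/8})$.

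The central technical obstacle is that $\cubic{\cdot}{p}$ is not a Dirichlet character, so the P\'olya--Vinogradov inequality cannot be applied to it directly; this is precisely what Lemma~\ref{CubicDictionary} remedies by transferring to the Eisenstein integers, where the cubic symbol becomes a genuine character of controlled conductor. A subsidiary bookkeeping issue is the choice of the three truncation levels $R_i$: although the final balance uses $R_1=R_2=R_3$ for simplicity, only $R_2$ appears in the $U_1$ bound (since only the quadratic character $\leg{a^{-1}m}{r_2}$ depends on $m$), so care is needed to verify that the symmetric choice is not wasteful at the order of accuracy required.
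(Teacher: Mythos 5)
Your proposal follows the paper's proof almost verbatim: the same reduction via Lemma~\ref{rholemma}(4) and Lemma~\ref{CubicDictionary}, the same split into a diagonal term $Y_{h,0}$ and off-diagonal term $Y_{h,1}$, the same truncation into $U_1$ (bounded by Lemma~\ref{SoundLemma1}) and $U_2$ (bounded by Lemma~\ref{UglyLemma} after discarding the degenerate $m$'s), and the identical choice $R_1=R_2=R_3=M^{5/4}/(\log X)^{9/4}$ followed by the trade $T^2\ll M^{3/8}$. This is the paper's argument, and your closing remark about the asymmetry of the $R_i$ (only $R_2$ entering the $U_1$ bound since only the quadratic character carries $m$-dependence) is a correct and worthwhile sanity check that the paper leaves implicit.
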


\subsection{Proof of Theorem~\ref{summatory}}\label{ProofThm2}

The claimed summatory formula for $N_h^{(a)}(d;X,T)$ counts the number of 3-tuples $(m,n,t)$ satisfying  (\ref{ranges1}-\ref{ranges4}), which guarantees that $(m^3-an^6)/t^2$ is square-free and negative. 
We shall show that this count is well approximated by those tuples, where these values are not divisible
by squares of small primes $p$. Namely, we let
\begin{equation}
    C^{(a)}_h(X,T):=\#\left\{(m,n,t) \ {\text {\rm satisfies (\ref{ranges1}-\ref{ranges4}) and}}\ p^2\nmid \frac{(m^3-an^6)}{t^2}~\mathrm{for~}p\leq\log X\right\}.
\end{equation}
By hypothesis, for sufficiently large $X,$ the dependence on the ranges for $m$ and $n$ on $X$ guarantee that $m^3-an^6$ is negative. Therefore, for large $X,$
we find that $C^{(a)}_h(X,T)$ is a good approximation, provided that $E_{h,2}^{(a)}(X,T)$ and $E_{h,3}^{(a)}(X,T)$ are small, where $Z(X,T):=X^{\frac{1-C}{2}}(\log X)^{\frac{2}{3}}$ and
\begin{align*}
    &E_{h,2}^{(a)}(X,T):=\\
    &\ \ \  \ \ \ \ \#\left\{(m,n,t) \ {\text {\rm satisfies (\ref{ranges1}-\ref{ranges4})}}~\mathrm{~and~}p^2\mid 
    \frac{(m^3-an^6)}{t^2}~\mathrm{for~some~}\log X<p\leq Z(X,T) \right\},\\
    &E_{h,3}^{(a)}(X,T):=\#\left\{(m,n,t)\ {\text {\rm satisfies (\ref{ranges1}-\ref{ranges4})}}~\mathrm{~and~}p^2\mid \frac{(m^3-an^6)}{t^2}~\mathrm{for~some~}p>Z(X,T)\right\}.
\end{align*}

We first obtain an  asymptotic formula for $C^{(a)}_h(X,T)$.
We define the product of small primes $P(X):=\prod_{p\leq\log X}p$. Then we have
\begin{align*}
    C^{(a)}_h(X,T)=\sum_{m,t}\sum_{\substack{N\leq n\leq2N\\\gcd(n,am)=1\\ n\equiv0\! \!
\! \pmod{4N^{(a)}}\\an^6\equiv m^3~\mathrm{(mod}~t^2)}}\sum_{l^2\mid\gcd((an^6-m^3)/t^2,P(X)^2)}\mu(l)=\sum_{m,t}\sum_{\substack{l|P(X)\\\gcd(l,am)=1}}\mu(l)\sum_{\substack{N\leq n\leq2N\\n\equiv0\! \!
\! \pmod{4N^{(a)}}\\an^6\equiv m^3~\mathrm{(mod}~l^2t^2)}} 1.
\end{align*}
To be clear, the outer sum in both expressions above is over pairs $(m,t)$ satisfying (\ref{ranges1}-\ref{ranges4}).
Since $T\leq X^{\frac{1}{22}}$, for any $\varepsilon>0$ we can bound the inner sum by
\begin{align*}
    \frac{N}{4N^{(a)}l^2t^2}\rho_{m}^{(a)}(l^2t^2)+O_a(\rho_{m}^{(a)}(l^2t^2))
    = \frac{N}{4N^{(a)}l^2t^2}\rho_{m}^{(a)}(l^2t^2)+O_a(X^{\varepsilon}).
\end{align*}
Here we used the fact that $\rho_m^{(a)}(t^2)=O_a(X^{\varepsilon}),$ which follows by multiplicativity and the fact that
$\omega(t)=O(t^{\varepsilon}).$
Hence, we have
\begin{align*}
    C^{(a)}_h(X,T)&=\sum_{(m,t)}\sum_{\substack{l|P(X)\\\gcd(l,am)=1}}\mu(l)\left(\frac{N}{4N^{(a)}l^2t^2}\rho_{m}^{(a)}(l^2t^2)+O_a(X^{\epsilon})\right)\\
    &=\frac{N}{4N^{(a)}t^2}\sum_{m,t}\rho_{m}^{(a)}(t^2)\sum_{\substack{l|P(X)\\\gcd(l,am)=1}}\frac{\mu(l)}{l^2}\rho_{m}^{(a)}\left(\frac{l}{\gcd(t,l)}\right)+O_a(\tau(P(X))X^{\varepsilon})\\
   &\asymp_a \frac{N}{T^2}\sum_{m,t} \rho_{m}^{(a)}(t^2)+O_a(X^{\varepsilon}).
\end{align*}
Applying Lemma \ref{SoundLemma2}, and by summing over $m$ and $t$, we obtain
\begin{align*}
    C^{(a)}_h(X,T)\asymp_a\frac{MN}{T}+O_a(MTX^{\varepsilon})
    \asymp_{a} 
X^{\frac{1}{2}}T^{A+B-1} + o_{a,\varepsilon}(X^{\frac{1}{3}+\varepsilon}T^{A+1}).
\end{align*}

Since the asymptotic for $C^{(a)}_h(X,T)$ above is the conclusion of the theorem, it suffices 
to show that $E_{h,2}^{(a)}(X,T)$ and $E_{h,3}^{(a)}(X,T)$ are of lower order. We now bound $E_{h,2}^{(a)}(X,T)$ by the following estimate
\begin{align*}
    E_{h,2}^{(a)}(X,T)=\sum_{m,t}\sum_{\log X\leq p\leq Z}\sum_{\substack{N\leq n\leq2N\\n\equiv0\! \!
\! \pmod{4N^{(a)}}\\an^6\equiv m^3~\mathrm{(mod}~t^2p^2)}}1&\ll_a \sum_{m,t}\sum_{\log X\leq p\leq Z}\left(\frac{N}{t^2p^2}\rho_{m}^{(a)}(t^2p^2)+O_a(\rho_{m}^{(a)}(t^2))\right)\\
    &\ll_a \sum_{m,t}\left(\frac{N\rho_{m}^{(a)}(t^2)}{T^2\log X}+o_a\left(\left(\frac{X}{T}\right)^{\frac{1}{3}}\rho_{m}^{(a)}(t^2)\right)\right).
\end{align*}
We used the facts that $\rho_{m}^{(a)}(t^2p^2)\leq 6\rho_{m}^{(a)}(t^2)$ (see Lemma~\ref{rholemma}), $1/t^2\leq1/T^2$, $1/p^2\leq 1/\log X$ and $(T/X)^{\frac{1}{3}}=o(1).$
Applying Lemma \ref{SoundLemma2}, and by summing over $m$ and $t$, we have the lower order asymptotic
\begin{align*}
    E_{h,2}^{(a)}(X,T)\ll_a\frac{MN}{T\log X}+o_{a}\left(\frac{X}{T}^{\frac{1}{3}}\sum_{m,t}\rho_{m}^{(a)}(t^2)\right)\ll_a
    \frac{ X^{\frac{1}{2}}T^{A+B-1}}{\log X}
     + o_a(X^{\frac{1}{3}}T^{-1}).
\end{align*}

Finally, we estimate $E_{h,3}^{(a)}(X,T)$ by using the arithmetic of number fields. Let $p>Z$ be prime, and suppose $d=p^2b$. Then we have $m^3=an^6-p^2b t^2$ and $b\ll\frac{X}{Z^2}=T/{(\log X)^{\frac{4}{3}}}$. Fix $m$ in $[M,2M]$ and with the condition of $b$, we claim that the number of choices for $n$ and $t$ is bounded by $o_a(m)$. Hence, by the claim, we have the lower order asymptotic
\begin{align*}
    E_{h,3}^{(a)}(X,T)\ll_a \frac{X}{Z^2}\sum_{M\leq m\leq 2M}\tau(m)\ll \frac{X}{Z^2}M\log X\ll_{a,\varepsilon} o_{a,\varepsilon}(X^{\frac{1}{3}+\varepsilon}T^{A+1}).
\end{align*}
Now, we prove the claim by factoring the equation $m^3=an^6-p^2b t^2$ in $\Q(\sqrt{a},\sqrt{b})$; namely,
\begin{equation}\label{Nbound}
    (m)^3=(\sqrt{a}n^3+pt\sqrt{b})(\sqrt{a}n^3-pt\sqrt{b}).
\end{equation}
Since $\gcd(m,n)=\gcd(m,a)=1$ and $m$ is odd, we have two coprime factors. Therefore, the number of choices for $n$ and $t$
is bounded by the total number of factorizations of the ideal $(m)$, which is $o_a(m)=O(X^{\varepsilon})$.

\section{Proof of Theorems~\ref{MainTheorem}}\label{MainTheoremProof}

We begin with an elementary lemma concerning the suitability of  points on quadratic twists.

\begin{lemma}\label{criterion}
Assume the hypotheses of Theorem~\ref{summatory}, and suppose that $T=o(X)$. Then the following are true.
\begin{enumerate}
\item
If $-D$ is odd, $Q_{-D}=(\frac{m}{n^2},\frac{2t}{n^3})\in E_{-D}^{(a)}(\Q),$ where
$(m,n,t)$ satisfies (\ref{ranges1}-\ref{ranges4}) with $d=D,$ then $Q_{-D}$ is suitable in the sense of (\ref{suitable}) when $X$ is sufficiently large.

\item
If $-D=-4D_0$, where $D_0\equiv 1, 2\pmod 4$ is square-free, $Q_{-D}=(\frac{m}{n^2},\frac{t}{n^3})\in E_{-D}^{(a)}(\Q),$ where $(m,n,t)$ satisfies (\ref{ranges1}-\ref{ranges4}) with $d=D_0,$  then $Q_{-D}$ is suitable in the sense of (\ref{suitable}) when $X$ is sufficiently large.
\end{enumerate}
\end{lemma}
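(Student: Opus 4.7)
The plan is to verify, in each of the two cases, the three requirements in the definition (\ref{suitable}) of a suitable point: that $Q_{-D}$ lies on $E_{-D}^{(a)}(\Q)$ with $uv \neq 0$, and that the chain of inequalities
\[(|u|+w^2)\exp(\delta(E^{(a)})+d(E^{(a)})) < D < \frac{(|u|+w^2)^2\max(|u|,w^2)^2}{v^4}\]
holds for $X$ large enough. All the verifications reduce to size bookkeeping using (\ref{ranges3}) and the Diophantine relation (\ref{ranges1}).

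First, I would show $Q_{-D}$ is a rational point in the required normalization. In case (1), substituting $(u,v,w)=(m,2t,n)$ into the twist model $-D(y/2)^2 = x^3 - a$ yields $-Dt^2 = m^3 - an^6$, which is exactly (\ref{ranges1}) with $d=D$. In case (2), $(u,v,w)=(m,t,n)$ together with $D = 4D_0$ similarly gives $-D_0 t^2 = m^3 - an^6$, which is (\ref{ranges1}) with $d=D_0$. The coprimality conditions in (\ref{ranges2}) ensure that $\gcd(u^3, v^2, w^6)$ is sixth-power-free as required by Theorem~\ref{ThmQF}, and that $v = 2t$ is even in case (1), where $-D$ is odd; and $uv \neq 0$ follows from $m,t\geq 1$.

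Second, I would estimate the sizes involved using (\ref{ranges3}) and (\ref{ranges4}). Because $3A < 6B$, the term $an^6 \asymp T^{6B} X$ dominates $m^3 \asymp T^{3A} X$ for $X$ large, so $an^6 - m^3 \asymp T^{6B} X$, giving $D \asymp T^{6B-2} X$ (with the same asymptotic for $D_0$ in case (2)). Since $2B > A$, one also has $n^2 \asymp T^{2B} X^{1/3}$ dominating $m \asymp T^{A} X^{1/3}$, so $|u|+w^2 \asymp n^2$ and $\max(|u|,w^2) = n^2$. The lower inequality in (\ref{suitable}) then follows from $D/(|u|+w^2) \gg T^{4B-2} X^{2/3}$, which grows with $X$ under the stated hypotheses, dwarfing the constant $\exp(\delta(E^{(a)})+d(E^{(a)}))$ for $X$ sufficiently large.

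The upper inequality $Dv^4 < (|u|+w^2)^2 \max(|u|, w^2)^2$ will be the main obstacle. Rewriting $Dv^2 = 4(an^6 - m^3)$, it becomes $16 t^2(an^6 - m^3) < (m+n^2)^2 n^4$ in case (1); case (2), where $v = t$, replaces the coefficient $16$ by $4$ and is therefore strictly easier. Bounding the left side by $16 a t^2 n^6 \ll T^2 n^6$ and the right side below by $n^8$, the inequality reduces to the growth condition $T^{2-2B} = o(X^{1/3})$, which is precisely where the hypothesis $T = o(X)$ combined with (\ref{ranges4}) is invoked. The balance $0 < A < 2B < 2/3$ furnishes both the positivity of $D$ and the gap needed to sandwich $D$ between the two quantities in (\ref{suitable}); once these size relations are in place, the remaining checks are routine.
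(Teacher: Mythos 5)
Your proposal follows the same high-level strategy as the paper (verify the two inequalities in (\ref{suitable}) by size bookkeeping using (\ref{ranges1})--(\ref{ranges4})), but it contains a genuine error that the paper is careful to avoid. You claim that since $3A<6B$, the term $an^6\asymp T^{6B}X$ ``dominates'' $m^3\asymp T^{3A}X$, so that $an^6-m^3\asymp T^{6B}X$ and hence $D\asymp T^{6B-2}X$. This two-sided asymptotic is false: the constants in (\ref{ranges3}) were chosen so that $aN^6 = M^3 = X/64$ exactly, and the only separation between $an^6$ and $m^3$ comes from the $T$-powers $T^{6B}$ versus $T^{3A}$, which coincide when $T=1$. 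In the regime $T=O(1)$ --- which is precisely the case used in Lemma~\ref{lambda} and in the proof of Theorem~\ref{MainTheorem} --- the quantities $an^6$ and $m^3$ lie in overlapping ranges, and $an^6-m^3 = Dt^2$ can be much smaller than $T^{6B}X$ (it is only bounded below by the Diophantine constraint, not by the size ranges). Thus $D\asymp T^{6B-2}X$ holds only as an upper bound, so your derived lower bound $D/(|u|+w^2)\gg T^{4B-2}X^{2/3}$ for the left-hand inequality of (\ref{suitable}) is unjustified. The paper's argument never claims a lower bound for $an^6-m^3$; for the right-hand inequality it uses only $|m|+n^2\geq n^2\geq N^2$, $\max(|m|,n^2)\geq N^2$, the trivial upper bound $an^6-m^3\leq an^6\leq 64aN^6$, and the relation $N\gg_a\sqrt{M}$.

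A second concrete error: you assert that $T^{2-2B}=o(X^{1/3})$ ``is precisely where the hypothesis $T=o(X)$ combined with (\ref{ranges4}) is invoked.'' Since (\ref{ranges4}) gives $B<1/3$, we have $2-2B>4/3$, so $T=o(X)$ yields only $T^{2-2B}=o(X^{2-2B})$ with exponent $>4/3$, which is much weaker than $o(X^{1/3})$. (For instance, $T=X^{1/2}$ and $B=1/6$ give $T^{5/3}=X^{5/6}$, not $o(X^{1/3})$.) The required condition is closer to $T=o(X^{1/4})$ and does not follow from $T=o(X)$ alone; in the paper's actual applications this gap is immaterial because $T$ is taken to be $O(1)$ or a power of $\log X$, but your stated justification is incorrect.
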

\begin{remark}
Since we assume that $v$ is even when $-D$ is odd,
we choose to use $m, n,$ and $t$ instead of $u, v,$ and $w$ to avoid confusion and enjoy the convenience of working with a single equation.
\end{remark}
\begin{proof}
For brevity, we only consider  when $-D$ is odd, as the same method applies to the other case. 
Recalling the convention in (\ref{twist}), by clearing denominators and dividing by 4 we obtain
\begin{equation}\label{C1}
 -Dt^2=m^3-an^6.
\end{equation}
Furthermore, since $Q_{-D}$ is suitable, we have
\begin{align}\label{suitabledef}
    (|m|+n^2)\exp\left(\delta(E^{(a)})+d(E^{(a)})\right)<D<\frac{(|m|+n^2)^2\max(|m|,n^2)^2}{16t^4}.
\end{align}
We first consider the right hand inequality above using (\ref{C1}).   We find that
\begin{align*}
    t^4<\frac{(|m|+n^2)^2\max(|m|,n^2)^2}{16}\cdot\frac{t^2}{an^6-m^3}\quad \Longleftrightarrow\quad t^2 <\frac{(|m|+n^2)^2\max(|m|,n^2)^2}{16(an^6-m^3)}.
\end{align*}
Recalling that $M:=T^{A}X^{\frac{1}{3}}$ and $N:=T^BX^{\frac{1}{6}}$ in (\ref{ranges3}),
we have $N\gg_a\sqrt{M}$ as $X\rightarrow +\infty.$  Therefore this inequality holds for sufficiently large $X$.
For the left hand inequality in (\ref{suitabledef}), the desired claim follows similarly from (\ref{C1}), as
\begin{align*}
      t^2<\frac{an^6-m^3}{ (|m|+n^2)\exp\left(\delta(E^{(a)})+d(E^{(a)})\right)}\quad \Longleftrightarrow\quad |t|<\sqrt{\frac{an^6-m^3}{(|m|+n^2)\exp\left(\delta(E^{(a)})+d(E^{(a)})\right)}}.
\end{align*}
\end{proof}

We require an explicit lower bound for the ratio
of constants defined by (\ref{cE}) and (\ref{cED}) for those points $Q_{-D}$ considered in this lemma.

\begin{lemma}\label{lambda}
Assume the hypotheses of Theorem~\ref{summatory}, and let $T=O(1).$ If $X$ is sufficiently large, then $Q_{-D}$ is suitable in the sense of
(\ref{suitable}), and we have
$$
 c(E^{(a)},Q_{-D})> 
 \frac{1}{5} \cdot\frac{c(E^{(a)})}{\log \log D}.
 $$
 \end{lemma}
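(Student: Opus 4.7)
The first claim that $Q_{-D}$ is suitable is immediate from Lemma~\ref{criterion}, since the hypotheses of Theorem~\ref{summatory} hold with $T=O(1)=o(X)$.

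For the second claim, unwinding definitions (\ref{cE}) and (\ref{cED}) with $w=n$, the ratio reduces to
\[
\frac{c(E^{(a)},Q_{-D})}{c(E^{(a)})}=\prod_{p\mid n}\left(1-\frac{1}{|E^{(a)}(\F_p)|}\right),
\]
so it suffices to show this product exceeds $1/(5\log\log D)$ for $D$ large. Under the hypotheses, $T=O(1)$ and the range in (\ref{ranges3}) give $n\leq 2N\ll_a X^{1/6}$, while the relation $-Dt^2=m^3-an^6$ together with suitability forces $D\asymp_a X$, so $\log n\leq (1/6+o(1))\log D$ and hence $\log\log n\leq \log\log D+O_a(1)$.

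The main analytical step uses Hasse's bound $|E^{(a)}(\F_p)|\geq(\sqrt{p}-1)^2$ at primes of good reduction. There are only finitely many bad primes, each of which contributes a fixed positive factor absorbed into a constant, and at large good primes one has $(1-1/|E^{(a)}(\F_p)|)/(1-1/p)=1+O(p^{-3/2})$. Since $\sum_p p^{-3/2}<\infty$, passing to logarithms and summing over $p\mid n$ yields
\[
\prod_{p\mid n}\left(1-\frac{1}{|E^{(a)}(\F_p)|}\right)\geq C_{E^{(a)}}\cdot\prod_{p\mid n}\left(1-\frac{1}{p}\right)
\]
for a positive constant $C_{E^{(a)}}$ depending only on $a$.

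The remaining step is a sharp lower bound on $\prod_{p\mid n}(1-1/p)$ under $n\leq 2N$. For any $n\leq Y$, the product is minimized when $n$ is a primorial, giving the worst-case bound
\[
\prod_{p\mid n}(1-1/p)\geq \prod_{p\leq z(n)}(1-1/p)\sim\frac{e^{-\gamma}}{\log z(n)},
\]
where $z(n)$ is maximal with $\prod_{p\leq z(n)}p\leq n$; by the prime number theorem $z(n)\sim \log n$. Combining with $\log\log n\leq \log\log D+O_a(1)$ and Mertens' theorem gives
\[
\prod_{p\mid n}\left(1-\frac{1}{|E^{(a)}(\F_p)|}\right)\geq\frac{C_{E^{(a)}}\,e^{-\gamma}}{\log\log D}(1-o(1)).
\]
The main obstacle is extracting an explicit numerical constant. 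Since $e^{-\gamma}\approx 0.5615$ and $C_{E^{(a)}}$ can be bounded below by careful accounting of small-prime and bad-prime factors for the family $y^2=x^3-a$, the product $C_{E^{(a)}}e^{-\gamma}>0.2158>1/5$ for sufficiently large $D$, yielding the stated inequality.
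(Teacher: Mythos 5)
Your overall strategy is the paper's: deduce suitability from Lemma~\ref{criterion}, reduce to the Euler product $\prod_{p\mid n}(1-1/|E^{(a)}(\F_p)|)$, compare it to $\prod_{p\mid n}(1-1/p)$ via the Hasse bound, and bound the latter by the primorial worst case together with Mertens. The structure is sound, and your route through $z(n)\sim\log n$ is in fact a slightly cleaner way to land on $\log\log D$ than the paper's detour through $\kappa(X)=[\log_2(2TX^{1/2})]$ consecutive primes and the Rosser--Schoenfeld bound on $p_{\kappa(X)}$.

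The genuine gap is the last step: the entire content of the lemma is the explicit constant $1/5$, and you assert rather than prove that $C_{E^{(a)}}e^{-\gamma}>0.2158$. Two things must actually be carried out. First, the ``finitely many bad primes'' cannot be absorbed into an unspecified constant: you must check they contribute a \emph{known, uniform-in-$a$} factor (and in particular a nonzero one). Here the relevant facts are that $\gcd(n,am)=1$ excludes the bad primes $p\mid a$ with $p\geq 5$ from the product, while $2$ and $3$ always divide $n$ (since $n\equiv 0\pmod{4N^{(a)}}$ and $3\mid N^{(a)}$), and for $p\in\{2,3,5\}$ one verifies directly that $|E^{(a)}(\F_p)|=p+1$ for every $a$; these primes contribute exactly $\tfrac{2}{3}\cdot\tfrac{3}{4}\cdot\tfrac{5}{6}=\tfrac{5}{12}$ in the worst case. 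Second, for $p\geq 7$ you need an explicit lower bound on $\prod_{p\geq 7}\mathcal{F}(p)$ where $\mathcal{F}(p)=\frac{p}{p-1}\bigl(1-\frac{1}{p+1-2\sqrt{p}}\bigr)$; the paper computes this product to exceed $9/20$. Only after these two computations does one get a concrete constant ($\tfrac{5}{12}\cdot\tfrac{9}{20}\cdot\tfrac{15}{4}=\tfrac{45}{64}$ in the paper's normalization, i.e.\ your $C_{E^{(a)}}\geq\tfrac{45}{64}$ uniformly in $a$), and hence $C_{E^{(a)}}e^{-\gamma}\geq 0.39$, comfortably above $1/5$ once the $(1-o(1))$ is accounted for. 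Without these computations your proof establishes only that \emph{some} positive constant works, not the stated $1/5$.
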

 \begin{remark}
The proof of Theorem~\ref{MainTheorem} holds for any $T=o(X),$
 giving $\gg_{a.\varepsilon} X^{\frac{1}{2}-\varepsilon}$ many discriminants with class number lower bounds. For example, we may let $T=(\log X)^C$, where $C>0.$ However the multiplicative constant in the effective class number lower bound would have to be modified
 by following the proof of Lemma~\ref{lambda}.
 \end{remark}

\begin{proof} 
Lemma~\ref{criterion} guarantees that $Q_{-D}$ is suitable for large $X.$ 
Turning to the claimed inequality, we begin by noting that (\ref{cE}) and (\ref{cED}) give
$$
 \frac{c(E^{(a)},Q_{-D})}{c(E^{(a)})}=\prod_{\substack{p\ \mathrm{ prime}\\p\mid n}}\left(1-\frac{1}{|E^{(a)}(\F_p)|}\right).
$$
As mentioned in the proof of Lemma~\ref{PropRawBoundsCop}, $|E^{(a)}(\F_p)|$ includes the point at infinity, and does not require that $p$
is a prime of good reduction for $E^{(a)}$.
For the small primes $p\in \{2, 3, 5\}$, we have $|E^{(a)}(\F_p)|=p+1.$  Therefore,
the Hasse bound for trace of Frobenius for elliptic curves implies
$$
 \frac{c(E^{(a)},Q_{-D})}{c(E^{(a)})}> 
 \frac{5}{12}\cdot \prod_{\substack{p\mid n\\ \ p\geq 7\ {\text {\rm prime}}}}\left(1-\frac{1}{p+1-2\sqrt{p}}\right)=
 \frac{5}{12}\cdot \prod_{\substack{p\mid n\\ \ p\geq 7\ {\text {\rm prime}}}} \mathcal{F}(p)\cdot \left(1-\frac{1}{p}\right),
$$
where $\mathcal{F}(p):=\frac{p}{p-1}\left(1-\frac{1}{p+1-2\sqrt{p}}\right).$
Since $0<\mathcal{F}(p)<1$ and rapidly monotonically tends to 1 as $p\rightarrow +\infty$, we have 
$$
\prod_{\substack{p\mid n\\ \ p\geq 7\ {\text {\rm prime}}}} \mathcal{F}(p)> \prod_{p\geq 7 \ {\text {\rm prime}}} \mathcal{F}(p) >\frac{9}{20}.
$$
Therefore, we have
$$
\frac{c(E^{(a)},Q_{-D})}{c(E^{(a)})}> 
 \frac{3}{16} \cdot \prod_{\substack{p\mid n\\ \ p\geq 7\ {\text {\rm prime}}}}\left(1-\frac{1}{p}\right).
$$

We are left with the problem of obtaining a lower bound for product over primes $p\geq 7$ (if any) which divide $n$ above.
Since the Euler factors increase monotonically to 1 with the primes, and $N\leq n\leq 2N,$
we may bound this product from below with a product over sufficiently many consecutive primes $p\geq 7.$
Namely, if $p_1=2,\  p_2=3,\dots$ are the primes in order, then for large $X$ we have
\begin{equation}\label{ajax}
\frac{c(E^{(a)},Q_{-D})}{c(E^{(a)})}> 
 \frac{3}{16} \cdot \prod_{i=4}^{\kappa(X)} \left(1-\frac{1}{p_i}\right)=\frac{45}{64}\cdot
 \prod_{i=1}^{\kappa(X)} \left(1-\frac{1}{p_i}\right),
 \end{equation}
where  $\kappa(X):=[\log_2(2TX^{\frac{1}{2}})].$ 

For $x>1$, a classical theorem of Rosser and Schoenfeld \cite{RS1, RS2} unconditionally asserts that
$\pi(x)\leq 1.255056\cdot \frac{x}{\log x},$
where $\pi(x)$ is the usual prime counting function. Therefore, if $X$ is sufficiently large, then the fact that $T=O(1)$ implies  
\begin{equation}\label{effectivePNT}
p_{\kappa(X)}\leq \frac{[\log_2 (X^{\frac{1}{2}})]^2}{1.25}=:\lambda(X).
\end{equation}
Let $x>1$, then we recall the effective version of Merten's Theorem (see e.g. \cite[Eq. (3.27)]{RS1})
\begin{align*}
    \prod_{\ p\leq x\ {\text {\rm prime}}}\left(1-\frac{1}{p}\right)>\frac{e^{-\gamma}}{\log x}\left(1-\frac{1}{\log^2 x}\right),
\end{align*}
where $\gamma\approx  0.5772$ is the Euler-Mascheroni constant.
\noindent
Combining this with (\ref{ajax}) and (\ref{effectivePNT}) gives 
$$
\frac{c(E^{(a)},Q_{-D})}{c(E^{(a)})}> 
 \frac{45}{64} \cdot \frac{e^{-\gamma}}{\log (\lambda(X))}\left(1-\frac{1}{\log(\lambda(X))^2}\right)>
0.21586\cdot\frac{1}{\log \log X}.
$$
To complete the proof we must relate those discriminants $-D$ obtained from (\ref{ranges1}-\ref{ranges4}) to $X.$ Namely, we need to consider the following equation
\begin{equation*}
-dt^2=m^3-an^6,
\end{equation*}
where $d:=D$ or $D_0$ depending on the parity of $D$.  
Since $a$ is a positive integer and $T=O(1)$, the growth conditions of $m$ and $n$ imply
$$
d=\frac{an^6-m^3}{t^2}\geq  \frac{aT^{6B}X-T^{3A}X}{
4T^2}\gg_{T} X,
$$
which in turn, for every $\varepsilon>0$, gives
$1/\log \log D \leq (1+\varepsilon)/\log \log X$ for large $X$.
Therefore, for large $X$ we obtain
$$
 c(E^{(a)},Q_{-D})\geq 0.2158 \cdot\frac{c(E^{(a)})}{\log\log D}.
$$
\end{proof}

\begin{proof}[Proof of Theorem~\ref{MainTheorem}]

Suppose that $a$ is a non-zero integer, and that
$-D<0$ is a fundamental discriminant for which there is a rational point $Q_{-D}=(\frac{m}{n^2},\frac{2t}{n^3})\in E_{-D}^{(a)}(\Q),$ when $D$ odd (resp. $Q_{-D}=(\frac{m}{n^2},\frac{t}{n^3})\in E_{-D}^{(a)}(\Q),$ when $D=4D_0$ even).  We may assume that $t$ is non-zero, and so
$Q_{-D}$ is not a 2-torsion point.
Furthermore, it is well known that at most finitely many twists of $E^{(a)}$ (see Proposition 1 of 
\cite{GouveaMazur}) have a torsion point with order $\neq 2$. Therefore, apart from possibly finitely many $-D$, we have that  $Q_{-D}$ has infinite order.

Recalling that our models are of the form (\ref{twist}), 
we find directly that $(m,n,t)$ satisfies (\ref{ranges1}), giving a solution to
\begin{equation}\label{aha}
-dt^2=m^3-an^6,
\end{equation}
where $d:=D$ or $D_0$ depending on the parity of $D$.  

We let ${W}\left(E^{(a)}\right)\in \{\pm 1\}$ be the sign of the functional equation
for the Hasse-Weil $L$-function $L(E^{(a)},s)$.  Recall that $N^{(a)}$ is the conductor
of $E^{(a)}.$  If we have
$\gcd(-D,N^{(a)})=1,$ then (see p. 3 of \cite{GouveaMazur}) the sign of the functional equation for
the quadratic twist $L(E^{(a)}_{-D},s)$ is 
\begin{equation}
W\left(E_{-D}^{(a)}\right)=\leg{-D}{N^{(a)}}\cdot W\left(E^{(a)}\right).
\end{equation}
Therefore, the Parity Conjecture implies that $r_{\Q}(E_{-D}^{(a)})$
is even when $\leg{-D}{N^{(a)}}=W\left(E^{(a)}\right).$ In particular, any
 triple $(m,n,t)$ also satisfying (\ref{ranges1}-\ref{ranges4}) 
conditionally has $r_Q(E_{-D}^{(a)})\geq 2.$

We now apply Theorem 3.1, with $A<2B<2/3$ and $T=O(1),$ with any $h\pmod{4N^{(a)}}$ for which (\ref{aha})
gives $\leg{-d}{N^{(a)}}=W\left(E^{(a)}\right).$ 
Theorem~\ref{summatory} and 
\begin{equation}\label{squaresum}
    \sum_{1\leq d\leq X} N^{(a)}_{h}(d;X,T)\asymp_{a} X^{\frac{1}{2}}.
  \end{equation}
By repeating the argument for (\ref{Nbound}), we have that $N^{(a)}_{h}(d;X,T)=O_{\varepsilon}(X^{\varepsilon}).$  
Therefore, we obtain  
\begin{align*}
     \# \left \{ -X<-D<0 \ : \ r_{\Q}(E_{-D}^{(a)})\geq 1 \right\} \gg_{a,\varepsilon} X^{\frac{1}{2}-\varepsilon}.
\end{align*}
Again, the Parity Conjecture allows us to further require that $r_{\Q}(E_{-D}^{(a)})\geq 2$.

This lower bound
produces $\gg_{a,\varepsilon}X^{\frac{1}{2}-\varepsilon}$ many discriminants $-X<-D<0$ for which $E_{-D}^{(a)}$ has an explicit  infinite order rational
point $Q_{-D}$.
Lemma~\ref{lambda}
 guarantees that $Q_{-D}$ is suitable in the sense of (\ref{suitable}), and also gives
$$c(E^{(a)},Q_{-D}) \geq  0.2158
\cdot\frac{c(E^{(a)})}{\log \log D} > \frac{1}{5}
\cdot\frac{c(E^{(a)})}{\log \log D}.
$$
Therefore, Theorem~\ref{MainTheorem} follows directly from Theorem~\ref{General}.
\end{proof}

\end{document}